\theoremstyle{plain}
\newtheorem{theorem}{Theorem}[section]
\newtheorem{lemma}[theorem]{Lemma}
\newtheorem{conjecture}[theorem]{Conjecture}
\theoremstyle{remark}
\newtheorem{claim}{Claim}
\title{The Steklov Problem on Rectangles and Cuboids}
\author{Arnold Tan Junhan}
\date{}
\begin{document}
\maketitle

\begin{abstract}
\noindent This paper is a brief account of the Steklov eigenvalue problem on a 2-dimensional rectangular domain, and then on a 3-dimensional rectangular box. It is divided into four sections. Section 1 relies heavily on real analytic methods to show the existence of an eigenfunction class which always produces the first non-trivial Steklov eigenvalue on a rectangle. Section 2 lists all possible Steklov eigenfunctions on a cuboid. The very brief section 3 gives the 3-dimensional analogue of the analytic results in Section 1. The analogue is given as conjecture, but is expected to derivate from standard (albeit tedious) real analysis methods, should one wish to expound on these calculations. Section 4 deals with the special cases of the square and the cube.
\end{abstract}

\section{Analysis of the 1st Steklov eigenspace on the rectangle}
Consider the rectangle defined on $[-1,1]\times[-a,a]$, where $a \leq 1$. One can classify the Steklov eigenfunctions on this domain into 4 parity classes (Auchmuty $\&$ Cho, 2014). Generally, each parity class has two eigenfunctions, but for the case of the square $(a=1)$, there is an additional eigenfunction $s(x,y)=xy$ in Class II, whose eigenvalue is simple and equal to $1$.
\vspace{1mm}
\\
One can consider the smallest nontrivial eigenvalue $\sigma_1$, and ask: \textit{Among all rectangular domains with constant perimeter, which domains maximize the first Steklov eigenvalue, $\sigma_1$?}
\vspace{1mm}
\\
\noindent One may normalize the eigenvalue $\sigma_1$ by multiplying with the perimeter of the domain, $L=4(1+a)$. The product, $\sigma_1L$, is an invariant with respect to a scaling of the domain.
\vspace{1mm}
\\
\noindent That is, one can define an equivalence relation between similar rectangles, with rectangles in the same equivalence class having the same first Steklov invariant. We identify any rectangle with a rectangle defined $[-1,1] \times [-a,a]$, where $a \leq 1$, so that we may restrict our analysis to such rectangles. Since similar rectangles have the same Steklov invariants, the question then becomes: \textit{Among all values of $a \in (0,1]$, which maximizes $4\sigma_1(1+a)$?}
\vspace{1mm}
\\
\noindent We show that $a=1$ does.
\\ As mentioned above, we consider the eigenfunctions by parity class. In the table below, we only need to consider positive $\nu$.

\FloatBarrier
\begin{table}[!htbp]
\centering
\begin{tabular}{l|c|c|r}
Class & Eigenfunction & Determining Equation & Eigenvalue $\sigma$\\\hline\hline
I(i) & cosh$(\nu x)$cos$(\nu y)$, & tan$(\nu a)+$tanh$(\nu)=0$ & $\nu$tanh$(\nu)$ \\
I(ii) & cos$(\nu x)$cosh$(\nu y)$ & tan$(\nu)+$tanh$(\nu a)=0$ & $\nu$tanh$(\nu a)$ \\\hline
II(i) & sinh$(\nu x)$sin$(\nu y)$, & tan$(\nu a)=$tanh$(\nu)$ & $\nu$coth$(\nu)$ \\
II(ii) & sin$(\nu x)$sinh$(\nu y)$ & tan$(\nu)=$tanh$(\nu a)$ & $\nu$coth$(\nu a)$ \\\hline
III(i) & cosh$(\nu x)$sin$(\nu y)$, & tan$(\nu a)=$coth$(\nu)$ & $\nu$tanh$(\nu)$ \\
III(ii) & cos$(\nu x)$sinh$(\nu y)$ & tan$(\nu)+$coth$(\nu a)=0$ & $\nu$coth$(\nu a)$ \\\hline
IV(i) & sinh$(\nu x)$cos$(\nu y)$, & tan$(\nu a)+$coth$(\nu)=0$ & $\nu$coth$(\nu)$ \\
IV(ii) & sin$(\nu x)$cosh$(\nu y)$ & tan$(\nu)=$coth$(\nu a)$ & $\nu$tanh$(\nu a)$
\end{tabular}
\caption{\label{eightclasses}The eigenfunctions fall into eight classes.}
\end{table}
\FloatBarrier

\vspace{1mm}
\noindent (We have omitted $s(x,y)=xy$, which only arises when $a=1$.)

\vspace{2mm}
\noindent Then, for instance, the 1st Steklov eigenfunction from class IV(ii) has value $\nu$ given by the 1st positive solution of tan$\nu = $coth$(\nu a)$. Of course, for given $a$, minimizing $\nu$ is equivalent to minimizing $\nu$tanh$(\nu a)$. In fact, we have the following simple lemma. 

\begin{lemma}\label{increasing}
For given $a \in (0,1]$, the maps $\nu \mapsto \nu$tanh$(\nu a)$ and $\nu \mapsto \nu$coth$(\nu a)$ are increasing on $\nu \in (0,\infty)$ (Auchmuty $\&$ Cho, 2014).
\end{lemma}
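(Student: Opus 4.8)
The plan is to show directly that each map has strictly positive derivative on $(0,\infty)$, since both functions are smooth there and positivity of the derivative immediately yields monotonicity.

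First I would dispatch $\nu \mapsto \nu\tanh(\nu a)$, which is the easy case. For $a>0$ it is a product of two strictly positive and strictly increasing functions of $\nu$ on $(0,\infty)$—namely $\nu$ and $\tanh(\nu a)$—so the product is increasing. Equivalently, differentiating gives $\tanh(\nu a) + \nu a/\cosh^2(\nu a)$, and both summands are manifestly positive for $\nu,a>0$.

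The substantive case is $g(\nu)=\nu\coth(\nu a)$, where the product-of-increasing-functions trick fails because $\coth(\nu a)$ is \emph{decreasing}. I would differentiate to obtain $g'(\nu)=\coth(\nu a)-\nu a/\sinh^2(\nu a)$ and then place this over a common denominator:
\[
g'(\nu)=\frac{\sinh(\nu a)\cosh(\nu a)-\nu a}{\sinh^2(\nu a)}=\frac{\tfrac12\sinh(2\nu a)-\nu a}{\sinh^2(\nu a)}.
\]
The denominator is positive for $\nu>0$, so the entire question reduces to the sign of the numerator.

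The one genuine ingredient—and the only place where something must be proved rather than read off—is the elementary inequality $\sinh t > t$ for $t>0$. Applying it with $t=2\nu a>0$ gives $\tfrac12\sinh(2\nu a)>\nu a$, so the numerator is positive and hence $g'(\nu)>0$. I would justify $\sinh t>t$ either from the power series $\sinh t = t + t^3/6 + \cdots$ or by noting that $\tfrac{d}{dt}(\sinh t - t)=\cosh t - 1>0$ on $(0,\infty)$ with $\sinh 0 - 0 = 0$. This settles both cases. I do not anticipate any real obstacle; the only subtlety worth flagging is that the $\coth$ case genuinely requires the $\sinh t > t$ inequality and cannot be disposed of by monotonicity of the individual factors alone.
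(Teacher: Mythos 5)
Your argument is correct, and it is more than the paper itself provides: the paper states this lemma without proof, simply attributing it to Auchmuty and Cho (2014), so there is no internal proof to compare against. Your treatment of $\nu\mapsto\nu\tanh(\nu a)$ as a product of positive increasing functions is the quickest possible disposal of that case, and your reduction of the $\coth$ case to the sign of $\tfrac12\sinh(2\nu a)-\nu a$, settled by the elementary inequality $\sinh t>t$ for $t>0$, is exactly the standard computation one would expect the cited reference to contain. You are also right to flag that the $\coth$ case is the only one with content, since $\coth(\nu a)$ is decreasing and the factor-by-factor argument fails there; the common-denominator step and the $\sinh t>t$ inequality (justified either by the power series or by noting $\cosh t-1>0$) close that gap completely. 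In short, your proposal supplies a self-contained, correct proof of a statement the paper merely cites, which strengthens rather than departs from the paper's logic.
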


\vspace{1mm}
\noindent We claim that the 1st nontrivial Steklov eigenvalue for a rectangle always comes from class IV(ii).

\vspace{1mm}
\noindent By considering intersections of graphs (see Figure 1 of Girouard $\&$ Polterovich, 2014), it is trivial that III(i) is a better minimizer of $\nu$ -- and therefore $\sigma$ -- than I(i). Likewise IV(ii) is a better minimizer of $\nu$ than I(ii), II(ii), and III(ii). Since tanh$(\nu a) <$ coth$(\nu a)$, we have that IV(ii) gives the smallest value of $\sigma$ compared with I(ii), II(ii), and III(ii).

\vspace{1mm}
\noindent It remains to compare II(i), III(i), IV(ii).

\begin{claim}
Fix $a \in (0,1]$. For a rectangle on $[-1,1] \times [-a,a]$, let the candidate for the 1st Steklov eigenvalue from class II(i) be $\sigma_1^{II(i)}$, and that from class III(i) be $\sigma_1^{III(i)}$. Then $\sigma_1^{III(i)} < \sigma_1^{II(i)}$.
\end{claim}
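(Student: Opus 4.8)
The plan is to reduce the comparison of two superficially different eigenvalues to a single monotone function evaluated at two points. First I would use the determining equations to rewrite both candidate eigenvalues in one common form. For class II(i), the relation $\tanh\nu = \tan(\nu a)$ gives $\coth\nu = \cot(\nu a)$, so that $\sigma_1^{II(i)} = \nu_{II}\cot(\nu_{II}a)$, where $\nu_{II}$ is the first positive root of $\tan(\nu a) = \tanh\nu$. For class III(i), the relation $\coth\nu = \tan(\nu a)$ gives $\tanh\nu = \cot(\nu a)$, so that $\sigma_1^{III(i)} = \nu_{III}\cot(\nu_{III}a)$, where $\nu_{III}$ is the first positive root of $\tan(\nu a) = \coth\nu$. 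Thus both candidates are values of the single function $\Phi(\nu) := \nu\cot(\nu a)$.

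Next I would show that $\Phi$ is strictly decreasing on $(0,\pi/(2a))$. Differentiating and clearing the positive factor $\sin^2(\nu a)$ reduces the sign of $\Phi'$ to that of $\tfrac12\sin(2\nu a) - a\nu$, which is negative on this interval because $\sin x < x$ for $x>0$. Given this monotonicity, the inequality $\sigma_1^{III(i)} < \sigma_1^{II(i)}$ will follow from $\nu_{II} < \nu_{III}$, once both roots are known to lie in $(0,\pi/(2a))$.

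For $a \in (0,1)$ I would locate the two first roots by a sign analysis. The function $\tan(\nu a) - \tanh\nu$ is negative near $0$ (its leading behaviour is $(a-1)\nu < 0$) and tends to $+\infty$ as $\nu \to \pi/(2a)^-$, so $\nu_{II} \in (0,\pi/(2a))$; since $\tanh\nu_{II} < 1$ we get $\tan(\nu_{II}a) < 1$, i.e.\ $\nu_{II}a < \pi/4$. Likewise $\tan(\nu a) - \coth\nu$ runs from $-\infty$ to $+\infty$ and is increasing on $(0,\pi/(2a))$, giving a unique root $\nu_{III}$ there with $\coth\nu_{III} > 1$, hence $\nu_{III}a > \pi/4$. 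Therefore $\nu_{II}a < \pi/4 < \nu_{III}a$, so $\nu_{II} < \nu_{III}$, and the monotonicity of $\Phi$ closes this case.

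The main obstacle is the square $a=1$, where the argument above breaks down: the small root of $\tan(\nu a) = \tanh\nu$ degenerates into the excluded eigenfunction $xy$ as $a \to 1$, so the genuine first class II(i) root jumps to a later branch and the clean ordering $\nu_{II} < \nu_{III}$ reverses. I would treat this case by direct estimates instead. On $(0,\pi/2)$ one has $\tan\nu > \tanh\nu$, and on $(\pi/2,\pi]$ one has $\tan\nu \le 0 < \tanh\nu$, so the first admissible root satisfies $\nu_{II} > \pi$ and hence $\sigma_1^{II(i)} = \nu_{II}\coth\nu_{II} > \pi$; meanwhile the root of $\tan\nu = \coth\nu$ lies in $(0,\pi/2)$, giving $\sigma_1^{III(i)} = \nu_{III}\tanh\nu_{III} < \nu_{III} < \pi/2$. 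The two estimates then combine to $\sigma_1^{III(i)} < \pi/2 < \pi < \sigma_1^{II(i)}$, which settles $a=1$ and completes the proof. The delicate point throughout is the bookkeeping of which branch each first root occupies; the function-theoretic core (common form plus monotonicity of $\Phi$) is short.
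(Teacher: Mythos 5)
Your proof is correct, and its core mechanism --- rewriting both candidates as values of the single strictly decreasing function $\Phi(\nu)=\nu\cot(\nu a)$ on $(0,\pi/(2a))$ and then ordering the two roots --- is the same one the paper uses, since the paper also reads both eigenvalues off as heights of intersections with the common decreasing graph $\sigma=\nu\cot(\nu a)$. You differ in two ways, both to your advantage. First, you order the roots quantitatively via the threshold $\tan(\nu a)=1$: at the II(i) root $\tan(\nu_{II}a)=\tanh\nu_{II}<1$ forces $\nu_{II}a<\pi/4$, while at the III(i) root $\tan(\nu_{III}a)=\coth\nu_{III}>1$ forces $\nu_{III}a>\pi/4$; the paper instead argues pictorially that the intersection with the lower curve $\nu\tanh\nu$ must lie to the right of the intersection with $\nu\coth\nu$, invoking the mean value theorem only in passing. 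Second, and more substantively, you isolate $a=1$, where the generic argument genuinely fails: for $a=1$ one has $\nu\cot\nu<1<\nu\coth\nu$ throughout $(0,\pi/2)$, so the intersection that the paper asserts via the intermediate value theorem does not exist on that interval, and the first positive root of $\tan\nu=\tanh\nu$ jumps past $\pi$ (it is $3.9266\ldots$, consistent with the paper's Table \ref{tablesquare}); your direct estimate $\sigma_1^{III(i)}<\pi/2<\pi<\sigma_1^{II(i)}$ closes that case and thereby repairs a gap in the paper's own proof. The only small point to make explicit is the appeal to Lemma \ref{increasing} when you identify each candidate for $\sigma_1$ with the value at the \emph{first} positive root: that identification relies on the monotonicity of $\nu\mapsto\nu\coth\nu$ and $\nu\mapsto\nu\tanh\nu$, exactly as the paper notes when it says the lowest intersection is the one nearest the $\sigma$-axis.
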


\begin{proof}
We want to show that III(i) is a better minimizer of $\sigma_1$ than II(i). For II(i), an eigenvalue $\sigma$ is given by an intersection of the graphs $\sigma = \nu$coth$(\nu)$ and $\sigma = \nu$cot$(\nu a)$. For III(i), an eigenvalue $\sigma$ is given by an intersection of the graphs $\sigma = \nu$tanh$(\nu)$ and $\sigma = \nu$cot$(\nu a)$.

\vspace{1mm}
\noindent Naturally, in either case, the candidate for $\sigma_1$ is the intersection which has the lowest height.

\vspace{1mm}
\noindent We show that the candidate for $\sigma_1$ is smaller in the case of III(i).

\FloatBarrier
\begin{figure}[!htbp]
\centering
\includegraphics[width=1\textwidth]{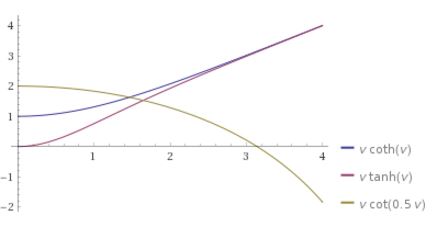}
\caption{\label{fig:IIIi}The graphs for $a = 0.5$.}
\end{figure}
\FloatBarrier

\vspace{1mm}
\noindent In each case, the lowest intersection must also be the intersection nearest to to the $\sigma$-axis, by Lemma \ref{increasing}. Hence we need only consider the graphs on $0<\nu< \nu_0$ where $\nu_0=\frac{\pi}{2a}$. Figure \ref{fig:IIIi} compares the lowest intersections for case III(i) and II(i). By the intermediate value theorem, there exists a candidate for $\sigma_1$ in $0<\nu< \nu_0$ for both cases III(i) and II(i). By the strict monotonicity of the graphs on this interval, these candidates are unique. Finally, the candidate from class III(i) has a lower height than the candidate from class II(i), because the graph of $\sigma = \nu$cot$(\nu a)$ is strictly decreasing on $0<\nu< \nu_0$. Otherwise we would derive a contradiction by the mean value theorem.

\end{proof}

\begin{claim}\label{iffsquare}
Fix $a \in (0,1]$. For a rectangle on $[-1,1] \times [-a,a]$, let the candidate for the 1st Steklov eigenvalue from class III(i) be $\sigma_1^{III(i)}$, and that from class IV(ii) be $\sigma_1^{IV(ii)}$. Then $\sigma_1^{IV(ii)} \leq \sigma_1^{III(i)}$, with equality iff $a = 1$.
\end{claim}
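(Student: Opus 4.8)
The plan is to compare the two determining equations directly on the common interval $0 < \nu < \nu_0 = \frac{\pi}{2a}$ where the first intersections live. From the table, the candidate $\sigma_1^{IV(ii)}$ arises from $\tan\nu = \coth(\nu a)$ (equivalently intersecting $\sigma = \nu\tanh(\nu a)$ with the appropriate trigonometric branch), while $\sigma_1^{III(i)}$ arises from $\tan(\nu a) = \coth\nu$ (i.e. intersecting $\sigma = \nu\tanh\nu$ with $\sigma = \nu\cot(\nu a)$). I need a framework in which both eigenvalues are read off as heights of intersection curves over the same $\nu$-axis, so that "lower $\nu$ at the first intersection gives lower $\sigma$" applies via Lemma~\ref{increasing}.

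**First I would** recast both determining equations into the form "(a monotone function of $\nu$) meets $\nu\cot(\nu a)$," as was already done for III(i) in the previous claim. For IV(ii), rewriting $\tan\nu = \coth(\nu a)$ shows its eigenvalue is governed by an intersection with $\sigma = \nu\tanh(\nu a)$, whereas III(i) uses $\sigma = \nu\tanh\nu$. Since $a \le 1$ and $\tanh$ is increasing, we have $\tanh(\nu a) \le \tanh\nu$ pointwise on $(0,\infty)$, with equality for all $\nu$ exactly when $a=1$. Thus the IV(ii) curve lies weakly below the III(i) curve. **Then I would** argue that when the lower of two increasing curves is intersected with the same strictly decreasing curve $\sigma = \nu\cot(\nu a)$ (strictly decreasing on $0<\nu<\nu_0$), the intersection point shifts to smaller $\nu$, and hence — by Lemma~\ref{increasing} applied to the decreasing curve's height, or directly by monotonicity — to smaller $\sigma$. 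This is exactly the mean-value-theorem / comparison argument sketched in the preceding claim, transplanted to the pair (IV(ii), III(i)).

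**The equality case** is where the real care is needed. Since $\tanh(\nu a) = \tanh\nu$ for all $\nu$ iff $a=1$, the two governing curves coincide identically precisely when $a=1$, forcing $\sigma_1^{IV(ii)} = \sigma_1^{III(i)}$; this is consistent with the extra eigenfunction $s(x,y)=xy$ appearing on the square. Conversely, for $a<1$ the strict inequality $\tanh(\nu a) < \tanh\nu$ holds at the relevant intersection abscissa, so the two first-intersection $\nu$-values are genuinely distinct, and strict monotonicity of $\sigma = \nu\cot(\nu a)$ converts this into the strict inequality $\sigma_1^{IV(ii)} < \sigma_1^{III(i)}$.

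**The main obstacle** I anticipate is not the pointwise inequality between the curves but the logical step that "lower governing curve $\Rightarrow$ smaller first intersection $\Rightarrow$ smaller eigenvalue." One must confirm that both first intersections really occur in the same interval $(0,\nu_0)$ and that the decreasing curve $\nu\cot(\nu a)$ is the controlling one, so that pushing the increasing curve downward slides the crossing leftward rather than creating a spurious new branch. I would handle this exactly as the previous claim did: restrict to $(0,\nu_0)$, invoke the intermediate value theorem for existence, strict monotonicity for uniqueness, and a mean-value-theorem contradiction to rule out the reversed inequality. The delicate bookkeeping is ensuring the equality-iff-$a=1$ claim is tight in both directions, which reduces cleanly to the elementary fact that $\tanh$ is strictly increasing.
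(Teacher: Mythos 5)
Your overall strategy---compare the governing curves pointwise and conclude that lower curves yield a lower first intersection---is the same as the paper's, but you have mis-identified one of the curves, and that is where the argument as written breaks. The two candidates do \emph{not} arise from intersections with the same decreasing curve $\sigma=\nu\cot(\nu a)$. Class III(i) has determining equation $\tan(\nu a)=\coth(\nu)$, equivalently $\nu\tanh(\nu)=\nu\cot(\nu a)$, so its candidate is the intersection of $\sigma=\nu\tanh(\nu)$ with $\sigma=\nu\cot(\nu a)$. Class IV(ii) has determining equation $\tan(\nu)=\coth(\nu a)$, equivalently $\nu\tanh(\nu a)=\nu\cot(\nu)$, so its candidate is the intersection of $\sigma=\nu\tanh(\nu a)$ with $\sigma=\nu\cot(\nu)$: the decreasing curve here is $\nu\cot(\nu)$, not $\nu\cot(\nu a)$. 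Your pointwise inequality $\tanh(\nu a)\le\tanh(\nu)$ therefore handles only the increasing curves; to close the argument you also need $\nu\cot(\nu)\le\nu\cot(\nu a)$ on the relevant interval, which again follows from $a\le 1$ and the monotonicity of $\cot$ on $(0,\pi)$. This two-sided comparison is exactly what the paper does (it observes that \emph{both} graphs for IV(ii) lie below the respective graphs for III(i)); once both curves are lowered, the height of the crossing decreases (if the new crossing lies to the left of the old one, bound its height via the increasing curve; if to the right, via the decreasing curve).

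A second, smaller problem: your intermediate assertion that lowering the increasing curve against a fixed decreasing curve shifts the intersection to smaller $\nu$ is backwards---it shifts the crossing to \emph{larger} $\nu$ but \emph{smaller} height, because the lowered increasing curve stays below the decreasing one for longer. The inequality you ultimately need concerns the height $\sigma$, not the abscissa $\nu$; and once both curves move, the abscissa comparison becomes genuinely ambiguous while the height comparison survives. On the positive side, your handling of the equality case is more careful than the paper's: with the curves correctly identified, the two pairs of graphs coincide identically iff $a=1$, and for $a<1$ both pointwise inequalities are strict at the relevant abscissas, which upgrades the conclusion to $\sigma_1^{IV(ii)}<\sigma_1^{III(i)}$; the paper's proof simply asserts the strict inequality and never isolates the case $a=1$.
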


\begin{proof}
Similar to before, we want to show that IV(ii) is a better minimizer of $\sigma_1$ than III(i). For III(i), an eigenvalue $\sigma$ is given by an intersection of the graphs $\sigma = \nu$tanh$(\nu)$ and $\sigma =\nu$cot$(\nu a)$. For IV(ii), an eigenvalue $\sigma$ is given by an intersection of the graphs $\sigma =\nu$coth$(\nu a)$ and $\sigma =\nu$cot$(\nu)$.

\vspace{1mm}
\noindent We show that the candidate for $\sigma_1$ is smaller in the case of IV(ii).

\FloatBarrier
\begin{figure}[!htbp]
\centering
\includegraphics[width=1\textwidth]{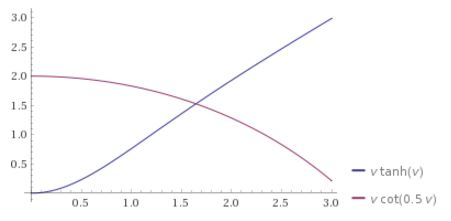}
\caption{\label{fig:III(i)}The candidate for $\sigma_1$ of Class III(i), in the case $a = 0.5$.}
\end{figure}
\FloatBarrier

\FloatBarrier
\begin{figure}[!htbp]
\centering
\includegraphics[width=1\textwidth]{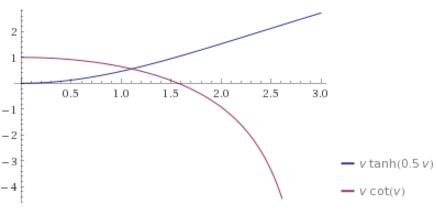}
\caption{\label{fig:IV(ii)}The candidate for $\sigma_1$ of Class IV(ii), in the case $a = 0.5$.}
\end{figure}
\FloatBarrier

\vspace{1mm}
\noindent In each case, the lowest intersection must also be the intersection nearest to to the $\sigma$-axis, by $(*)$. Hence we need only consider the graphs on $0<\nu< \nu_0$ where $\nu_0=\frac{\pi}{2a}$. Figures \ref{fig:III(i)} and \ref{fig:IV(ii)} give the lowest intersection for class III(i) and class IV(ii) respectively. These intersections can be shown to exist in $0<\nu< \nu_0$ by the intermediate value theorem. (A sample calculation for this is done explicitly in the proof of Claim \ref{diff}.) Since it is clear that both graphs in Figure \ref{fig:IV(ii)} are lower than the respective graphs in Figure \ref{fig:III(i)}, we can conclude that $\sigma_1^{IV(ii)} < \sigma_1^{III(i)}$.

\end{proof}

\vspace{1mm}
\noindent We have just shown that the 1st Steklov invariant on a given rectangle is \textit{always} given by eigenfunction IV(ii)! Moreover, for $a \neq 1$, this is the only eigenfunction in the 1st eigenspace, because $\sigma_1^{IV(ii)}$ is strictly smaller than any other eigenvalue. Let us state this as a theorem.

\begin{theorem}
On any rectangular domain that is not a square, the 1st Steklov eigenspace has basis $\{$sin$(\nu x)$cosh$(\nu y)$$\}$.
\end{theorem}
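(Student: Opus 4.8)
The plan is to collect the pairwise comparisons already in hand into a single chain of inequalities identifying class IV(ii) as the unique minimiser, and then to read the dimension of the first eigenspace off the strictness of that chain in the non-square case. The discussion preceding the two Claims has already thinned the eight classes: III(i) dominates I(i); IV(ii) dominates I(ii), II(ii) and III(ii) (via both the smaller determining root and the bound $\tanh(\nu a)<\coth(\nu a)$); and II(i) dominates IV(i), since the first root of $\tan(\nu a)+\coth(\nu)=0$ exceeds that of $\tan(\nu a)=\tanh(\nu)$ while both classes carry the same increasing eigenvalue $\nu\coth(\nu)$. Hence the first nontrivial eigenvalue reduces to
\[
\sigma_1=\min\bigl\{\sigma_1^{II(i)},\ \sigma_1^{III(i)},\ \sigma_1^{IV(ii)}\bigr\}.
\]

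First I would invoke the two Claims. The first Claim gives $\sigma_1^{III(i)}<\sigma_1^{II(i)}$, and Claim \ref{iffsquare} gives $\sigma_1^{IV(ii)}\le\sigma_1^{III(i)}$ with equality exactly when $a=1$. Chaining these,
\[
\sigma_1^{IV(ii)}\le\sigma_1^{III(i)}<\sigma_1^{II(i)},
\]
so IV(ii) attains the minimum; combined with the dominations above, $\sigma_1^{IV(ii)}$ is no larger than the leading candidate of every one of the eight classes. This already shows the first Steklov eigenvalue always comes from IV(ii).

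Next I would specialise to $a\neq 1$ to determine the dimension. Now Claim \ref{iffsquare} makes $\sigma_1^{IV(ii)}<\sigma_1^{III(i)}$ strict, and every remaining class lies strictly above $\sigma_1^{III(i)}$ or $\sigma_1^{IV(ii)}$; tracing the links shows $\sigma_1^{IV(ii)}$ sits strictly below the leading candidate of all seven other classes, hence strictly below every other Steklov eigenvalue (each being at least the leading candidate of its own class). Within class IV(ii), $\sigma_1^{IV(ii)}$ corresponds to the smallest positive root of $\tan\nu=\coth(\nu a)$ on $(0,\nu_0)$, which is unique by the monotonicity used in the Claim proofs, so it is a simple eigenvalue realised by the single eigenfunction $\sin(\nu x)\cosh(\nu y)$. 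Thus the first eigenspace is one-dimensional with the asserted basis. I would finish by recording that the exceptional eigenfunction $s(x,y)=xy$ arises only at $a=1$ and so does not intervene here.

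The argument is essentially bookkeeping, so the one point demanding care -- the ``hard part'' -- is to confirm that IV(ii) beats every class \emph{strictly} once $a\neq 1$, not merely III(i). Concretely, each link of the chain must be strict: the intra-pair comparisons (III(i) over I(i), IV(ii) over I(ii), II(i) over IV(i)) are strict for all $a$ by Lemma \ref{increasing}, the comparisons over II(ii) and III(ii) are strict by $\tanh(\nu a)<\coth(\nu a)$, and the single link that degenerates at $a=1$, namely $\sigma_1^{IV(ii)}\le\sigma_1^{III(i)}$, is strict for $a<1$ by Claim \ref{iffsquare}. Any one non-strict link would permit the first eigenspace to grow, so it is precisely this uniform strictness that yields one-dimensionality.
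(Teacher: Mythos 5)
Your proposal is correct and follows essentially the same route as the paper, which likewise reduces the problem to the comparisons $\sigma_1^{IV(ii)} \le \sigma_1^{III(i)} < \sigma_1^{II(i)}$ via the two Claims and then reads off simplicity from the strictness of $\sigma_1^{IV(ii)} < \sigma_1^{III(i)}$ when $a \neq 1$. The one place you go beyond the paper is in explicitly supplying the comparison of II(i) against IV(i) (same increasing eigenvalue map $\nu\coth(\nu)$, smaller first root for II(i)), which the paper passes over silently when it declares that only II(i), III(i), IV(ii) remain; this is a worthwhile patch rather than a different approach.
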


\vspace{3mm}
\noindent Below is a graph of this invariant against the length $a \in (0,1]$.

\FloatBarrier
\begin{figure}[!htbp]
\centering
\includegraphics[width=1\textwidth]{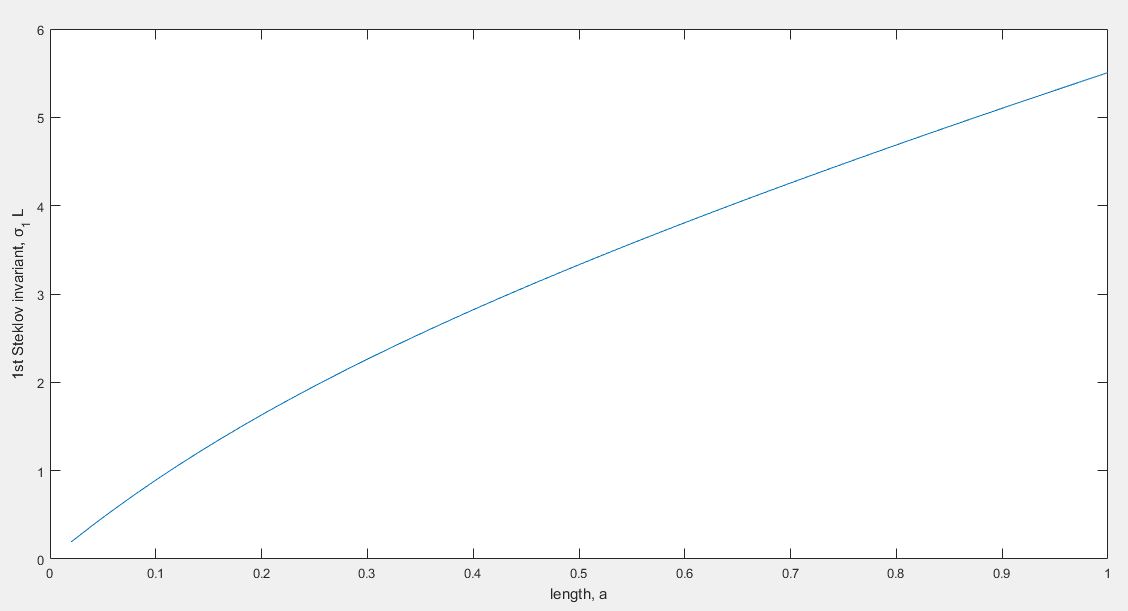}
\caption{\label{fig:smallest}The 1st Steklov invariant for a rectangle on $[-1,1]\times[-a,a]$.}
\end{figure}
\FloatBarrier

\vspace{3mm}
\noindent For completeness, we also plot the invariant given by each of the 8 eigenfunctions against $a$ in Figure \ref{fig:all}.

\FloatBarrier
\begin{figure}[!htbp]
\centering
\includegraphics[width=1\textwidth]{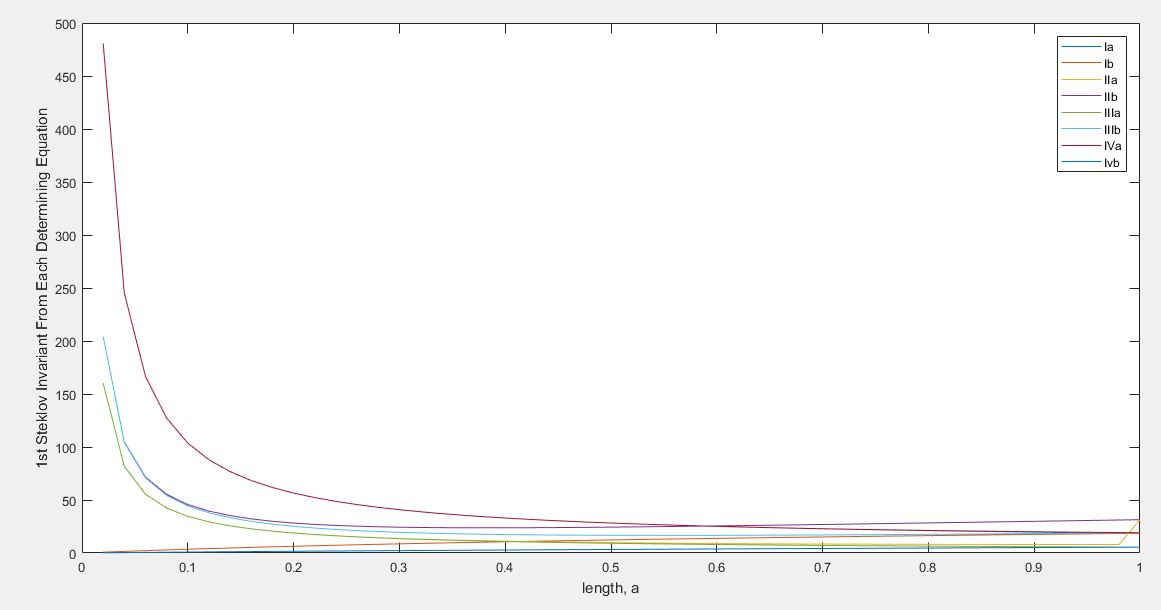}
\caption{\label{fig:all}The 1st Steklov invariant for each of 8 determining equations.}
\end{figure}
\FloatBarrier

\vspace{1mm}
\noindent The lowest graph in Figure \ref{fig:all} is, of course, the same graph as in Figure \ref{fig:smallest}.
Figure \ref{fig:smallest} suggests that $a = 1$ maximizes the 1st Steklov invariant. In fact, we now show that the graph is differentiable, increasing with $a$, and tends to the origin (in the first quadrant).

\begin{claim}\label{diff}
The graph in Figure \ref{fig:smallest} is differentiable on $a \in (0,1)$.
\end{claim}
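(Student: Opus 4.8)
The plan is to write the invariant as a composition of the root $\nu(a)$ of the class IV(ii) determining equation with smooth elementary functions, and to secure the differentiability of $\nu(a)$ via the implicit function theorem. Recall that Figure \ref{fig:smallest} plots $\sigma_1 L = 4(1+a)\,\sigma_1^{IV(ii)}(a)$, where, writing $\nu(a)$ for the first positive root of the determining equation $\tan\nu = \coth(\nu a)$, we have $\sigma_1^{IV(ii)}(a) = \nu(a)\tanh(a\,\nu(a))$. By the preceding theorem, for every $a \in (0,1)$ the first eigenvalue is exactly this class IV(ii) value, so no class-switching can introduce a corner. Since $a \mapsto 4(1+a)$ and $(\nu,a)\mapsto \nu\tanh(a\nu)$ are smooth, everything reduces to showing that $a \mapsto \nu(a)$ is differentiable on $(0,1)$.

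First I would localize the root. Because $\coth(\nu a) > 1$ for all $\nu a > 0$, any solution of $\tan\nu = \coth(\nu a)$ must satisfy $\tan\nu > 1$, so no root occurs for $\nu \in (0,\pi/4]$. This is the promised sample IVT calculation: setting $G(\nu,a) := \tan\nu - \coth(\nu a)$, one has $G(\pi/4,a) = 1 - \coth(a\pi/4) < 0$, while $G(\nu,a)\to +\infty$ as $\nu \to (\pi/2)^-$ (the term $\tan\nu$ blows up while $\coth(\nu a)$ stays finite for $a>0$). By the intermediate value theorem there is a root in $(\pi/4,\pi/2)$, and since $\pi/2 \le \nu_0 = \pi/(2a)$ this is the first intersection counted in the earlier claims. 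Thus $\nu(a) \in (\pi/4,\pi/2)$.

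Next I would apply the implicit function theorem to $G$ on the open set $(\pi/4,\pi/2)\times(0,1)$, on which $G$ is real analytic (the only singularities of $\tan\nu$ and $\coth(\nu a)$, namely $\nu = \pi/2$ and $\nu a = 0$, lie on the boundary). The key computation is
\[
\frac{\partial G}{\partial \nu}(\nu,a) = \sec^2\nu + \frac{a}{\sinh^2(\nu a)} > 0,
\]
which is strictly positive throughout this region. Positivity does double duty: it makes $G$ strictly increasing in $\nu$, so the root in $(\pi/4,\pi/2)$ is unique and $\nu(a)$ is genuinely well defined, and it verifies the nondegeneracy hypothesis of the implicit function theorem. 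Hence about each $a_0 \in (0,1)$ the equation $G(\nu,a)=0$ has a unique differentiable (indeed real-analytic) solution branch, which by uniqueness of the root coincides with $\nu(a)$. Therefore $\nu(a)$ is differentiable on $(0,1)$.

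Finally, differentiability of the invariant follows by the chain and product rules: $\sigma_1^{IV(ii)}(a) = \nu(a)\tanh(a\,\nu(a))$ is differentiable as a composition of the differentiable $\nu(a)$ with smooth functions, and multiplying by the smooth factor $4(1+a)$ preserves differentiability. I expect the only genuine subtlety to be the bookkeeping of the localization step — confirming that the first root stays strictly inside $(\pi/4,\pi/2)$, so that we never collide with the poles where $G$ fails to be smooth; once the root is pinned inside this interval, the sign of $\partial G/\partial\nu$ makes the implicit function theorem application routine.
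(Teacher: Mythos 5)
Your proposal is correct and follows essentially the same route as the paper: establish the root of the class IV(ii) determining equation by the intermediate value theorem, verify uniqueness via monotonicity, apply the implicit function theorem using the nonvanishing $\nu$-partial derivative, and conclude by the chain rule. The only difference is cosmetic: you work with $G(\nu,a)=\tan\nu-\coth(\nu a)$ localized to $(\pi/4,\pi/2)$, which makes $\partial G/\partial\nu>0$ manifest term by term, whereas the paper uses $h(a,\nu)=\nu\cot\nu-\nu\tanh(\nu a)$ on $(0,\pi/2)$ and must substitute the root equation to see the sign of $h_\nu$.
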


\begin{proof}
Fix $a \in (0,1)$. Define $k_a:(0,\frac{\pi}{2}) \to \mathbb{R}$, $k_a(\nu) = \nu$cot$(\nu)-\nu$tanh$(\nu a)$. Since $\lim_{\nu \downarrow 0} k_a(\nu)=1>0$ and $\lim_{\nu \uparrow \frac{\pi}{2}} k_a(\nu)=-\frac{\pi}{2}\text{tanh}(\frac{\pi}{2} a) <0$, the intermediate value theorem guarantees some root $\nu_1^{IV(ii)} \in (0,\frac{\pi}{2})$. That is, cot$(\nu_1^{IV(ii)})-$tanh$(\nu_1^{IV(ii)} a)=0$. In fact, $\nu_1^{IV(ii)}$ is unique since $k_a$ is strictly decreasing. (Clearly $\sigma_1^{IV(ii)}=\nu_1^{IV(ii)}$tanh$(\nu_1^{IV(ii)}a)$.)
The above allows me to define $h:\mathbb{R}\times (0,\frac{\pi}{2}) \to \mathbb{R}$, $h(a,\nu)=k_a(\nu)$.
Next, we show that $\nu$ is a differentiable function of $a \in (0,1)$ by applying the implicit function theorem to $h$.
At each root $(a,\nu_1^{IV(ii)})$ of $h$, we check that $h_{\nu} \neq 0$:

\begin{align*} 
h_{\nu}( \ (a,\nu_1^{IV(ii)}) \ ) &= -\text{tanh}(\nu_1^{IV(ii)} a) - \nu_1^{IV(ii)} a \text{sech}^2(\nu_1^{IV(ii)} a) + \text{cot}(\nu_1^{IV(ii)}) - \nu_1^{IV(ii)} \text{cosec}^2(\nu_1^{IV(ii)}) \\ 
 &=  - \nu_1^{IV(ii)} a \text{sech}^2(\nu_1^{IV(ii)} a) - \nu_1^{IV(ii)} \text{cosec}^2(\nu_1^{IV(ii)}) \\
 &< 0.
\end{align*}

\vspace{1mm}
\noindent Therefore the implicit function theorem ensures that $\nu$ is locally a differentiable function of $a$ for each $a \in (0,1)$, and hence globally differentiable on $(0,1)$. It is also continuous on $[0,1]$. The composition $\sigma = \nu$tanh$(\nu a)$ is therefore a differentiable function of $a$.
\end{proof}

\begin{claim}
The graph in Figure \ref{fig:smallest} is strictly increasing on $(0,1] \ni a$. Moreover, $\sigma \downarrow 0$ as $a \downarrow 0$.
\end{claim}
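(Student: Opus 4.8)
The plan is to collapse the two-variable dependence into a composition of one-variable monotone maps. Throughout I write $\nu_1 = \nu_1^{IV(ii)}(a)$ for the unique root in $(0,\tfrac{\pi}{2})$ of $\cot\nu = \tanh(\nu a)$ furnished by Claim \ref{diff} (whose existence-and-uniqueness argument is valid verbatim at $a=1$, so $\nu_1\in(0,\tfrac\pi2)$ for every $a\in(0,1]$), so that $\sigma_1(a)=\nu_1\tanh(\nu_1 a)$ and the quantity graphed in Figure \ref{fig:smallest} is $I(a)=4(1+a)\sigma_1(a)$. The first and crucial move is to feed the determining equation back into $\sigma_1$: since $\tanh(\nu_1 a)=\cot\nu_1$ at the root, we obtain the clean identity $\sigma_1(a)=\nu_1\cot\nu_1 = g(\nu_1)$, where $g(\nu):=\nu\cot\nu$. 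This expresses $\sigma_1$ through $\nu_1$ alone and is what makes the rest a formality.

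Next I would establish two monotonicities. First, that $g(\nu)=\nu\cot\nu$ is strictly decreasing on $(0,\tfrac\pi2)$: writing $g'(\nu)=(\tfrac12\sin 2\nu-\nu)/\sin^2\nu$ and using $\sin 2\nu<2\nu$ for $\nu>0$ makes the numerator negative, so $g'<0$. Second, that the root $\nu_1(a)$ is strictly decreasing in $a$. For this I prefer a comparison argument over differentiation: fixing $a<a'$ and setting $F(a,\nu):=\cot\nu-\tanh(\nu a)$, which is strictly decreasing in $\nu$ (Claim \ref{diff}), the monotonicity of $\tanh$ gives $F(a',\nu_1(a))=\cot\nu_1(a)-\tanh(\nu_1(a)a')<\cot\nu_1(a)-\tanh(\nu_1(a)a)=F(a,\nu_1(a))=0$; since $F(a',\cdot)$ is decreasing and vanishes at $\nu_1(a')$, this forces $\nu_1(a')<\nu_1(a)$. (Equivalently one may read off $d\nu_1/da<0$ from the implicit-function computation already in Claim \ref{diff}.) Composing the two facts, $\sigma_1(a)=g(\nu_1(a))$ is a decreasing function of a decreasing function, hence strictly increasing on $(0,1]$. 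As $4(1+a)$ is positive and strictly increasing and $\sigma_1(a)>0$, the product $I(a)$ is strictly increasing, which is the first assertion.

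For the limit no extra monotonicity is required: the squeeze $0<\sigma_1(a)=\nu_1\tanh(\nu_1 a)<\tfrac\pi2\tanh(\tfrac\pi2 a)$, using $\nu_1<\tfrac\pi2$, forces $\sigma_1(a)\to 0$ as $a\downarrow 0$, whence $I(a)=4(1+a)\sigma_1(a)\to 0$ and the graph tends to the origin. (If the behaviour of $\nu_1$ itself is wanted, the determining equation gives $\cot\nu_1=\tanh(\nu_1 a)\to 0$, so $\nu_1\to\tfrac\pi2$, consistent with $\sigma_1=\nu_1\cot\nu_1\to 0$.)

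The hard part will be step two above, the monotonicity of $\nu_1$ in $a$: it is the only place where the coupling between the two variables genuinely matters, and the care needed is to run the comparison on the correct branch, namely $(0,\tfrac\pi2)$, where $F(a,\cdot)$ is strictly monotone and the relevant root is unique. Once the sign of $d\nu_1/da$ (or, equivalently, the comparison inequality) is pinned down, everything else is a formal consequence of composing monotone maps, and the conclusion follows with no further computation.
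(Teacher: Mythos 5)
Your proof is correct, and it is considerably more than the paper offers: the paper's entire proof of this claim is an appeal to Figure \ref{fig:IV(ii)}, asserting that ``clearly'' the intersection of $\sigma=\nu\tanh(\nu a)$ with $\sigma=\nu\cot(\nu)$ rises with $a$ and sinks to $0$ as $a\downarrow 0$. Your argument is the rigorous content behind that picture, and the decomposition you chose is exactly the right one: the identity $\sigma_1=\nu_1\cot\nu_1$ (obtained by substituting the determining equation back into the eigenvalue formula) says that the intersection point always lies on the fixed curve $g(\nu)=\nu\cot\nu$, so the whole question reduces to how the abscissa $\nu_1(a)$ moves along that curve. Your two monotonicity lemmas --- $g'<0$ on $(0,\tfrac{\pi}{2})$ via $\sin 2\nu<2\nu$, and $\nu_1(a')<\nu_1(a)$ for $a<a'$ via the sign of $F(a',\nu_1(a))$ together with the strict decrease of $F(a',\cdot)$ --- are precisely what the paper's ``clearly'' and the parenthetical ``(continuously!)'' are standing in for, and the comparison argument is cleaner than differentiating the implicit function (though, as you note, the sign of $d\nu_1/da$ from the computation in Claim \ref{diff} would also do). The squeeze $0<\sigma_1(a)<\tfrac{\pi}{2}\tanh(\tfrac{\pi}{2}a)$ likewise replaces the paper's visual limit argument, and you correctly track the distinction between $\sigma_1$ and the plotted invariant $4(1+a)\sigma_1(a)$, which the paper elides. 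The one point worth flagging is minor: you cite Claim \ref{diff} for the strict decrease of $F(a,\cdot)=\cot\nu-\tanh(\nu a)$, whereas that claim establishes it for $k_a(\nu)=\nu F(a,\nu)$; but the statement you actually need is immediate, since $\cot$ is strictly decreasing and $\tanh(\nu a)$ strictly increasing on the interval in question, so nothing is lost.
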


\begin{proof}
Consider again Figure \ref{fig:IV(ii)}, which displays (for the special case $a=0.5$) the intersection of $\sigma=\nu $tanh$(\nu a)$ with $\sigma=\nu $cot$(\nu)$. Clearly as $a \downarrow 0$, the intersection height decreases (continuously!) to $0$. Moreover, for $a_1 < a_2$, we have $\sigma_1 < \sigma_2$.
\end{proof}

\vspace{1mm}
\noindent (Alternatively, by examining the \textit{Rayleigh quotient}, one can see easily that $\sigma \downarrow 0$ as $a \downarrow 0$.)

\section{Finding Steklov eigenfunctions on a cuboid}

Now we move to the 3D case.
The notation here is such that $(a,b,c)$ give the dimensions of a cube, but similarly to the 2D case, we may let $c=1$ be the longest side without loss of generality.

\vspace{1mm}
\noindent Let us consider a cuboid on $[-a,a]\times[-b,b]\times[-c,c]$.
we use separation of variables to solve Laplace's equation with a Steklov condition on the faces of the cuboid. This treatment will assume the completeness of the set of eigenfunctions obtained by separation of variables. For the 2D case, completeness can be shown using 'sloshing' methods (see Girouard and Polterovich, 2014).
\\ By symmetry, we only need to consider eigenfunctions which are odd or even in $x$, $y$, and $z$. This gives 8 parity classes of eigenfunctions. In the tables below, we need only consider $\lambda_i>0$.
\\ \\
1. CLASS 000: Even in $x$, $y$, $z$.

\begin{align*}
& Eigenfunction & & Eigenvalue \\\Xhline{5\arrayrulewidth}
          s(x,y,z) & =  1            & \sigma&=0 \\\hline
          s(x,y,z) & =  \text{cos}(\sqrt{\lambda_1^2+\lambda_2^2}x)\text{cosh}(\lambda_1y)\text{cosh}(\lambda_2z)            &  \sigma&= \lambda_1 \text{tanh} (\lambda_1b) = \lambda_2 \text{tanh} (\lambda_2c) = -\sqrt{\lambda_1^2 + \lambda_2^2}\text{tan}(\sqrt{\lambda_1^2 + \lambda_2^2}a) \\\hline
          s(x,y,z) & =  \text{cosh}(\lambda_1x)\text{cos}(\sqrt{\lambda_1^2+\lambda_2^2}y)\text{cosh}(\lambda_2z)            &  \sigma&= \lambda_1 \text{tanh} (\lambda_1a) = \lambda_2 \text{tanh} (\lambda_2c) = -\sqrt{\lambda_1^2 + \lambda_2^2}\text{tan}(\sqrt{\lambda_1^2 + \lambda_2^2}b) \\\hline
          s(x,y,z) & =  \text{cosh}(\lambda_1x)\text{cosh}(\lambda_2y)\text{cos}(\sqrt{\lambda_1^2+\lambda_2^2}z)            &  \sigma&= \lambda_1 \text{tanh} (\lambda_1a) = \lambda_2 \text{tanh} (\lambda_2b) = -\sqrt{\lambda_1^2 + \lambda_2^2}\text{tan}(\sqrt{\lambda_1^2 + \lambda_2^2}c) \\\hline
          s(x,y,z) & =  \text{cosh}(\sqrt{\lambda_1^2+\lambda_2^2}x)\text{cos}(\lambda_1y)\text{cos}(\lambda_2z)            &  \sigma&= -\lambda_1 \text{tan} (\lambda_1b) = -\lambda_2 \text{tan} (\lambda_2c) = \sqrt{\lambda_1^2 + \lambda_2^2}\text{tanh}(\sqrt{\lambda_1^2 + \lambda_2^2}a) \\\hline
          s(x,y,z) & =  \text{cos}(\lambda_1x)\text{cosh}(\sqrt{\lambda_1^2+\lambda_2^2}y)\text{cos}(\lambda_2z)            &  \sigma&= -\lambda_1 \text{tan} (\lambda_1a) = -\lambda_2 \text{tan} (\lambda_2c) = \sqrt{\lambda_1^2 + \lambda_2^2}\text{tanh}(\sqrt{\lambda_1^2 + \lambda_2^2}b) \\\hline
          s(x,y,z) & =  \text{cos}(\lambda_1x)\text{cos}(\lambda_2y)\text{cosh}(\sqrt{\lambda_1^2+\lambda_2^2}z)            &  \sigma&= -\lambda_1 \text{tan} (\lambda_1b) = -\lambda_2 \text{tan} (\lambda_2a) = \sqrt{\lambda_1^2 + \lambda_2^2}\text{tanh}(\sqrt{\lambda_1^2 + \lambda_2^2}c) \\\hline
\end{align*}

2. CLASS 001: Even in $x$, $y$, Odd in $z$.

\begin{align*}
& Eigenfunction & & Eigenvalue \\\Xhline{5\arrayrulewidth}
          s(x,y,z) & =  \text{cos}(\sqrt{\lambda_1^2+\lambda_2^2}x)\text{cosh}(\lambda_1y)\text{sinh}(\lambda_2z)            &  \sigma&= \lambda_1 \text{tanh} (\lambda_1b) = \lambda_2 \text{coth} (\lambda_2c) = -\sqrt{\lambda_1^2 + \lambda_2^2}\text{tan}(\sqrt{\lambda_1^2 + \lambda_2^2}a) \\\hline
          s(x,y,z) & =  \text{cosh}(\lambda_1x)\text{cos}(\sqrt{\lambda_1^2+\lambda_2^2}y)\text{sinh}(\lambda_2z)            &  \sigma&= \lambda_1 \text{tanh} (\lambda_1a) = \lambda_2 \text{coth} (\lambda_2c) = -\sqrt{\lambda_1^2 + \lambda_2^2}\text{tan}(\sqrt{\lambda_1^2 + \lambda_2^2}b) \\\hline
          s(x,y,z) & =  \text{cosh}(\lambda_1x)\text{cosh}(\lambda_2y)\text{sin}(\sqrt{\lambda_1^2+\lambda_2^2}z)            &  \sigma&= \lambda_1 \text{tanh} (\lambda_1a) = \lambda_2 \text{tanh} (\lambda_2b) = \sqrt{\lambda_1^2 + \lambda_2^2}\text{cot}(\sqrt{\lambda_1^2 + \lambda_2^2}c) \\\hline
          s(x,y,z) & =  \text{cosh}(\sqrt{\lambda_1^2+\lambda_2^2}x)\text{cos}(\lambda_1y)\text{sin}(\lambda_2z)            &  \sigma&= -\lambda_1 \text{tan} (\lambda_1b) =\lambda_2 \text{cot} (\lambda_2c) = \sqrt{\lambda_1^2 + \lambda_2^2}\text{tanh}(\sqrt{\lambda_1^2 + \lambda_2^2}a) \\\hline
          s(x,y,z) & =  \text{cos}(\lambda_1x)\text{cosh}(\sqrt{\lambda_1^2+\lambda_2^2}y)\text{sin}(\lambda_2z)            &  \sigma&= -\lambda_1 \text{tan} (\lambda_1a) = \lambda_2 \text{cot} (\lambda_2c) = \sqrt{\lambda_1^2 + \lambda_2^2}\text{tanh}(\sqrt{\lambda_1^2 + \lambda_2^2}b) \\\hline
          s(x,y,z) & =  \text{cos}(\lambda_1x)\text{cos}(\lambda_2y)\text{sinh}(\sqrt{\lambda_1^2+\lambda_2^2}z)            &  \sigma&= -\lambda_1 \text{tan} (\lambda_1a) = -\lambda_2 \text{tan} (\lambda_2b) = \sqrt{\lambda_1^2 + \lambda_2^2}\text{coth}(\sqrt{\lambda_1^2 + \lambda_2^2}c)
          \\\hline
          s(x,y,z) & =  z \text{cosh} (\lambda x) \text{cos} (\lambda y)           &  \sigma&= 1/c, \text{coth}(\lambda a) = \lambda c = -\text{cot}(\lambda b) \\\hline
          s(x,y,z) & =  z \text{cos} (\lambda x) \text{cosh} (\lambda y)           &  \sigma&= 1/c, \text{coth}(\lambda b) = \lambda c = -\text{cot}(\lambda a) \\\hline
\end{align*} 

3. CLASS 010: Even in $x$, $z$, Odd in $y$.

\begin{align*}
& Eigenfunction & & Eigenvalue \\\Xhline{5\arrayrulewidth}
          s(x,y,z) & =  \text{cos}(\sqrt{\lambda_1^2+\lambda_2^2}x)\text{sinh}(\lambda_2y)\text{cosh}(\lambda_1z)            &  \sigma&= \lambda_1 \text{tanh} (\lambda_1c) = \lambda_2 \text{coth} (\lambda_2b) = -\sqrt{\lambda_1^2 + \lambda_2^2}\text{tan}(\sqrt{\lambda_1^2 + \lambda_2^2}a) \\\hline
          s(x,y,z) & =  \text{cosh}(\lambda_1x)\text{sinh}(\lambda_2y) \text{cos}(\sqrt{\lambda_1^2+\lambda_2^2}z)           &  \sigma&= \lambda_1 \text{tanh} (\lambda_1 a) = \lambda_2 \text{coth} (\lambda_2b) = -\sqrt{\lambda_1^2 + \lambda_2^2}\text{tan}(\sqrt{\lambda_1^2 + \lambda_2^2}c) \\\hline
          s(x,y,z) & =  \text{cosh}(\lambda_1x)\text{sin}(\sqrt{\lambda_1^2+\lambda_2^2}y) \text{cosh}(\lambda_2z)           &  \sigma&= \lambda_1 \text{tanh} (\lambda_1 a) = \lambda_2 \text{tanh} (\lambda_2c) = \sqrt{\lambda_1^2 + \lambda_2^2}\text{cot}(\sqrt{\lambda_1^2 + \lambda_2^2}b) \\\hline
          s(x,y,z) & =  \text{cosh}(\sqrt{\lambda_1^2+\lambda_2^2}x)\text{sin}(\lambda_2y)\text{cos}(\lambda_1z)            &  \sigma&= -\lambda_1 \text{tan} (\lambda_1c) =\lambda_2 \text{cot} (\lambda_2b) = \sqrt{\lambda_1^2 + \lambda_2^2}\text{tanh}(\sqrt{\lambda_1^2 + \lambda_2^2}a) \\\hline
          s(x,y,z) & =  \text{cos}(\lambda_1x)\text{sin}(\lambda_2y)  \text{cosh}(\sqrt{\lambda_1^2+\lambda_2^2}z)          &  \sigma&= -\lambda_1 \text{tan} (\lambda_1 a) = \lambda_2 \text{cot} (\lambda_2b) = \sqrt{\lambda_1^2 + \lambda_2^2}\text{tanh}(\sqrt{\lambda_1^2 + \lambda_2^2}c) \\\hline
          s(x,y,z) & =  \text{cos}(\lambda_1x) \text{sinh}(\sqrt{\lambda_1^2+\lambda_2^2}y)      \text{cos}(\lambda_2z)       &  \sigma&= -\lambda_1 \text{tan} (\lambda_1a) = -\lambda_2 \text{tan} (\lambda_2c) = \sqrt{\lambda_1^2 + \lambda_2^2}\text{coth}(\sqrt{\lambda_1^2 + \lambda_2^2}b)
          \\\hline
          s(x,y,z) & =  y \text{cosh} (\lambda x) \text{cos} (\lambda z)           &  \sigma&= 1/b, \text{coth}(\lambda a) = \lambda b = -\text{cot}(\lambda c) \\\hline
          s(x,y,z) & =  y \text{cos} (\lambda x) \text{cosh} (\lambda z)           &  \sigma&= 1/b, \text{coth}(\lambda c) = \lambda b = -\text{cot}(\lambda a) \\\hline
\end{align*}

4. CLASS 011: Odd in $y$, $z$, Even in $x$.

\begin{align*}
& Eigenfunction & & Eigenvalue \\\Xhline{5\arrayrulewidth}
          s(x,y,z) & =  \text{cosh}(\lambda_2x)\text{sin}(\sqrt{\lambda_1^2+\lambda_2^2}y) \text{sinh}(\lambda_1z)           &  \sigma&= \lambda_1 \text{coth} (\lambda_1c) = \lambda_2 \text{tanh} (\lambda_2a) = \sqrt{\lambda_1^2 + \lambda_2^2}\text{cot}(\sqrt{\lambda_1^2 + \lambda_2^2}b) \\\hline
          s(x,y,z) & =  \text{cosh}(\lambda_2x)\text{sinh}(\lambda_1y) \text{sin}(\sqrt{\lambda_1^2+\lambda_2^2}z)           &  \sigma&= \lambda_1 \text{coth} (\lambda_1 b) = \lambda_2 \text{tanh} (\lambda_2 a) = \sqrt{\lambda_1^2 + \lambda_2^2}\text{cot}(\sqrt{\lambda_1^2 + \lambda_2^2}c) \\\hline
          s(x,y,z) & =  \text{cos}(\sqrt{\lambda_1^2+\lambda_2^2}x)\text{sinh}(\lambda_1y) \text{sinh}(\lambda_2z)           &  \sigma&= \lambda_1 \text{coth} (\lambda_1 b) = \lambda_2 \text{coth} (\lambda_2c) = - \sqrt{\lambda_1^2 + \lambda_2^2}\text{tan}(\sqrt{\lambda_1^2 + \lambda_2^2}a) \\\hline
          s(x,y,z) & =  \text{cos}(\lambda_2x)\text{sinh}(\sqrt{\lambda_1^2+\lambda_2^2}y) \text{sin}(\lambda_1z)           &  \sigma&= \lambda_1 \text{cot} (\lambda_1c) = - \lambda_2 \text{tan} (\lambda_2 a) = \sqrt{\lambda_1^2 + \lambda_2^2}\text{coth}(\sqrt{\lambda_1^2 + \lambda_2^2} b) \\\hline
          s(x,y,z) & =  \text{cos}(\lambda_2x)\text{sin}(\lambda_1y)\text{sinh}(\sqrt{\lambda_1^2+\lambda_2^2}z)            &  \sigma&= \lambda_1 \text{cot} (\lambda_1 b) = - \lambda_2 \text{tan} (\lambda_2 a) = \sqrt{\lambda_1^2 + \lambda_2^2}\text{coth}(\sqrt{\lambda_1^2 + \lambda_2^2}c) \\\hline
          s(x,y,z) & =  \text{cosh}(\sqrt{\lambda_1^2+\lambda_2^2}x) \text{sin}(\lambda_1y)   \text{sin}(\lambda_2z)         &  \sigma&= \lambda_1 \text{cot} (\lambda_1 b) = \lambda_2 \text{cot} (\lambda_2c) = \sqrt{\lambda_1^2 + \lambda_2^2}\text{tanh}(\sqrt{\lambda_1^2 + \lambda_2^2}a)
          \\\hline
          s(x,y,z) & =  y \text{cos} (\lambda x) \text{sinh} (\lambda z)          &  \sigma&= 1/b, \text{tanh}(\lambda c) = \lambda b = -\text{cot}(\lambda a) \\\hline
          s(x,y,z) & =  y \text{cosh} (\lambda x)   \text{sin} (\lambda z)         &  \sigma&= 1/b, \text{tan}(\lambda c) = \lambda b = \text{coth}(\lambda a) \\\hline
          s(x,y,z) & =  z \text{cos} (\lambda x) \text{sinh} (\lambda y)            &  \sigma&= 1/c, \text{tanh}(\lambda b) = \lambda c = -\text{cot}(\lambda a) \\\hline
          s(x,y,z) & =  z \text{cosh} (\lambda x) \text{sin} (\lambda y)           &  \sigma&= 1/c, \text{tan}(\lambda b) = \lambda c = \text{coth}(\lambda a) \\\hline
\end{align*}

5. CLASS 100: Even in $y$, $z$, Odd in $x$.

\begin{align*}
& Eigenfunction & & Eigenvalue \\\Xhline{5\arrayrulewidth}
          s(x,y,z) & =  \text{sinh}(\lambda_2x) \text{cosh}(\lambda_1y) \text{cos}(\sqrt{\lambda_1^2+\lambda_2^2}z)         &  \sigma&= \lambda_1 \text{tanh} (\lambda_1b) = \lambda_2 \text{coth} (\lambda_2 a) = -\sqrt{\lambda_1^2 + \lambda_2^2}\text{tan}(\sqrt{\lambda_1^2 + \lambda_2^2} c) \\\hline
          s(x,y,z) & =  \text{sinh}(\lambda_2x) \text{cos}(\sqrt{\lambda_1^2+\lambda_2^2}y)  \text{cosh}(\lambda_1z)          &  \sigma&= \lambda_1 \text{tanh} (\lambda_1 c) = \lambda_2 \text{coth} (\lambda_2a) = -\sqrt{\lambda_1^2 + \lambda_2^2}\text{tan}(\sqrt{\lambda_1^2 + \lambda_2^2}b) \\\hline
          s(x,y,z) & =  \text{sin}(\sqrt{\lambda_1^2+\lambda_2^2}x)\text{cosh}(\lambda_2y)\text{cosh}(\lambda_1z)            &  \sigma&= \lambda_1 \text{tanh} (\lambda_1 c) = \lambda_2 \text{tanh} (\lambda_2b) = \sqrt{\lambda_1^2 + \lambda_2^2}\text{cot}(\sqrt{\lambda_1^2 + \lambda_2^2}a) \\\hline
          s(x,y,z) & =  \text{sin}(\lambda_2x)\text{cos}(\lambda_1y)\text{cosh}(\sqrt{\lambda_1^2+\lambda_2^2}z)            &  \sigma&= -\lambda_1 \text{tan} (\lambda_1b) =\lambda_2 \text{cot} (\lambda_2 a) = \sqrt{\lambda_1^2 + \lambda_2^2}\text{tanh}(\sqrt{\lambda_1^2 + \lambda_2^2}c) \\\hline
          s(x,y,z) & =  \text{sin}(\lambda_2x)\text{cosh}(\sqrt{\lambda_1^2+\lambda_2^2}y)\text{cos}(\lambda_1z)            &  \sigma&= -\lambda_1 \text{tan} (\lambda_1 c) = \lambda_2 \text{cot} (\lambda_2 a) = \sqrt{\lambda_1^2 + \lambda_2^2}\text{tanh}(\sqrt{\lambda_1^2 + \lambda_2^2}b) \\\hline
          s(x,y,z) & =  \text{sinh}(\sqrt{\lambda_1^2+\lambda_2^2}x)\text{cos}(\lambda_2y)\text{cos}(\lambda_1z)            &  \sigma&= -\lambda_1 \text{tan} (\lambda_1 c) = -\lambda_2 \text{tan} (\lambda_2b) = \sqrt{\lambda_1^2 + \lambda_2^2}\text{coth}(\sqrt{\lambda_1^2 + \lambda_2^2}a)
          \\\hline
          s(x,y,z) & =  x \text{cos} (\lambda y) \text{cosh} (\lambda z)           &  \sigma&= 1/a, \text{coth}(\lambda c) = \lambda a = -\text{cot}(\lambda b) \\\hline
          s(x,y,z) & =  x \text{cosh} (\lambda y) \text{cos} (\lambda z)          &  \sigma&= 1/a, \text{coth}(\lambda b) = \lambda a = -\text{cot}(\lambda c) \\\hline
\end{align*}

6. CLASS 101: Odd in $x$, $z$, Even in $y$.

\begin{align*}
& Eigenfunction & & Eigenvalue \\\Xhline{5\arrayrulewidth}
          s(x,y,z) & =  \text{sin}(\sqrt{\lambda_1^2+\lambda_2^2}x)\text{cosh}(\lambda_2y) \text{sinh}(\lambda_1z)           &  \sigma&= \lambda_1 \text{coth} (\lambda_1c) = \lambda_2 \text{tanh} (\lambda_2b) = \sqrt{\lambda_1^2 + \lambda_2^2}\text{cot}(\sqrt{\lambda_1^2 + \lambda_2^2} a) \\\hline
          s(x,y,z) & =  \text{sinh}(\lambda_1x)\text{cosh}(\lambda_2y) \text{sin}(\sqrt{\lambda_1^2+\lambda_2^2}z)           &  \sigma&= \lambda_1 \text{coth} (\lambda_1 a) = \lambda_2 \text{tanh} (\lambda_2b) = \sqrt{\lambda_1^2 + \lambda_2^2}\text{cot}(\sqrt{\lambda_1^2 + \lambda_2^2}c) \\\hline
          s(x,y,z) & =  \text{sinh}(\lambda_1x)\text{cos}(\sqrt{\lambda_1^2+\lambda_2^2}y) \text{sinh}(\lambda_2z)           &  \sigma&= \lambda_1 \text{coth} (\lambda_1 a) = \lambda_2 \text{coth} (\lambda_2c) = - \sqrt{\lambda_1^2 + \lambda_2^2}\text{tan}(\sqrt{\lambda_1^2 + \lambda_2^2}b) \\\hline
          s(x,y,z) & =  \text{sinh}(\sqrt{\lambda_1^2+\lambda_2^2}x)\text{cos}(\lambda_2y) \text{sin}(\lambda_1z)           &  \sigma&= \lambda_1 \text{cot} (\lambda_1c) = - \lambda_2 \text{tan} (\lambda_2b) = \sqrt{\lambda_1^2 + \lambda_2^2}\text{coth}(\sqrt{\lambda_1^2 + \lambda_2^2}a) \\\hline
          s(x,y,z) & =  \text{sin}(\lambda_1x)\text{cos}(\lambda_2y)\text{sinh}(\sqrt{\lambda_1^2+\lambda_2^2}z)            &  \sigma&= \lambda_1 \text{cot} (\lambda_1 a) = - \lambda_2 \text{tan} (\lambda_2b) = \sqrt{\lambda_1^2 + \lambda_2^2}\text{coth}(\sqrt{\lambda_1^2 + \lambda_2^2}c) \\\hline
          s(x,y,z) & =  \text{sin}(\lambda_1x)\text{cosh}(\sqrt{\lambda_1^2+\lambda_2^2}y)   \text{sin}(\lambda_2z)         &  \sigma&= \lambda_1 \text{cot} (\lambda_1a) = \lambda_2 \text{cot} (\lambda_2c) = \sqrt{\lambda_1^2 + \lambda_2^2}\text{tanh}(\sqrt{\lambda_1^2 + \lambda_2^2}b)
          \\\hline
          s(x,y,z) & =  x \text{cos} (\lambda y) \text{sinh} (\lambda z)          &  \sigma&= 1/a, \text{tanh}(\lambda c) = \lambda a = -\text{cot}(\lambda b) \\\hline
          s(x,y,z) & =  x \text{cosh} (\lambda y)   \text{sin} (\lambda z)         &  \sigma&= 1/a, \text{tan}(\lambda c) = \lambda a = \text{coth}(\lambda b) \\\hline
          s(x,y,z) & =  z \text{sinh} (\lambda x) \text{cos} (\lambda y)           &  \sigma&= 1/c, \text{tanh}(\lambda a) = \lambda c = -\text{cot}(\lambda b) \\\hline
          s(x,y,z) & =  z \text{sin} (\lambda x) \text{cosh} (\lambda y)           &  \sigma&= 1/c, \text{tan}(\lambda a) = \lambda c = \text{coth}(\lambda b) \\\hline
\end{align*}

7. CLASS 110: Odd in $x$, $y$, Even in $z$.

\begin{align*}
& Eigenfunction & & Eigenvalue \\\Xhline{5\arrayrulewidth}
          s(x,y,z) & =  \text{sin}(\sqrt{\lambda_1^2+\lambda_2^2}x)\text{sinh}(\lambda_1y)\text{cosh}(\lambda_2z)            &  \sigma&= \lambda_1 \text{coth} (\lambda_1b) = \lambda_2 \text{tanh} (\lambda_2c) = \sqrt{\lambda_1^2 + \lambda_2^2}\text{cot}(\sqrt{\lambda_1^2 + \lambda_2^2}a) \\\hline
          s(x,y,z) & =  \text{sinh}(\lambda_1x)\text{sin}(\sqrt{\lambda_1^2+\lambda_2^2}y)\text{cosh}(\lambda_2z)            &  \sigma&= \lambda_1 \text{coth} (\lambda_1 a) = \lambda_2 \text{tanh} (\lambda_2c) = \sqrt{\lambda_1^2 + \lambda_2^2}\text{cot}(\sqrt{\lambda_1^2 + \lambda_2^2}b) \\\hline
          s(x,y,z) & =  \text{sinh}(\lambda_1x)\text{sinh}(\lambda_2y)\text{cos}(\sqrt{\lambda_1^2+\lambda_2^2}z)            &  \sigma&= \lambda_1 \text{coth} (\lambda_1 a) = \lambda_2 \text{coth} (\lambda_2b) = - \sqrt{\lambda_1^2 + \lambda_2^2}\text{tan}(\sqrt{\lambda_1^2 + \lambda_2^2}c) \\\hline
          s(x,y,z) & =  \text{sinh}(\sqrt{\lambda_1^2+\lambda_2^2}x)\text{sin}(\lambda_1y)\text{cos}(\lambda_2z)            &  \sigma&= \lambda_1 \text{cot} (\lambda_1b) = - \lambda_2 \text{tan} (\lambda_2c) = \sqrt{\lambda_1^2 + \lambda_2^2}\text{coth}(\sqrt{\lambda_1^2 + \lambda_2^2}a) \\\hline
          s(x,y,z) & =  \text{sin}(\lambda_1x)\text{sinh}(\sqrt{\lambda_1^2+\lambda_2^2}y)\text{cos}(\lambda_2z)            &  \sigma&= \lambda_1 \text{cot} (\lambda_1 a) = - \lambda_2 \text{tan} (\lambda_2c) = \sqrt{\lambda_1^2 + \lambda_2^2}\text{coth}(\sqrt{\lambda_1^2 + \lambda_2^2}b) \\\hline
          s(x,y,z) & =  \text{sin}(\lambda_1x)\text{sin}(\lambda_2y)\text{cosh}(\sqrt{\lambda_1^2+\lambda_2^2}z)            &  \sigma&= \lambda_1 \text{cot} (\lambda_1a) = \lambda_2 \text{cot} (\lambda_2b) = \sqrt{\lambda_1^2 + \lambda_2^2}\text{tanh}(\sqrt{\lambda_1^2 + \lambda_2^2}c)
          \\\hline
          s(x,y,z) & =  x \text{sinh} (\lambda y) \text{cos} (\lambda z)           &  \sigma&= 1/a, \text{tanh}(\lambda b) = \lambda a = -\text{cot}(\lambda c) \\\hline
          s(x,y,z) & =  x \text{sin} (\lambda y) \text{cosh} (\lambda z)           &  \sigma&= 1/a, \text{tan}(\lambda b) = \lambda a = \text{coth}(\lambda c) \\\hline
          s(x,y,z) & =  y \text{sinh} (\lambda x) \text{cos} (\lambda z)           &  \sigma&= 1/b, \text{tanh}(\lambda a) = \lambda b = -\text{cot}(\lambda c) \\\hline
          s(x,y,z) & =  y \text{sin} (\lambda x) \text{cosh} (\lambda z)           &  \sigma&= 1/b, \text{tan}(\lambda a) = \lambda b = \text{coth}(\lambda c) \\\hline
\end{align*}

8. CLASS 111: Odd in $x$, $y$, $z$.

\begin{align*}
& Eigenfunction & & Eigenvalue \\\Xhline{5\arrayrulewidth}
          s(x,y,z) & =  xyz             & \sigma  &=1/a =1/b  =1/c \\\hline
          s(x,y,z) & =  \text{sin}(\sqrt{\lambda_1^2+\lambda_2^2}x)\text{sinh}(\lambda_1y)\text{sinh}(\lambda_2z)            &  \sigma&= \lambda_1 \text{coth} (\lambda_1b) = \lambda_2 \text{coth} (\lambda_2c) = \sqrt{\lambda_1^2 + \lambda_2^2}\text{cot}(\sqrt{\lambda_1^2 + \lambda_2^2}a) \\\hline
          s(x,y,z) & =  \text{sinh}(\lambda_1x)\text{sin}(\sqrt{\lambda_1^2+\lambda_2^2}y)\text{sinh}(\lambda_2z)            &  \sigma&= \lambda_1 \text{coth} (\lambda_1a) = \lambda_2 \text{coth} (\lambda_2c) = \sqrt{\lambda_1^2 + \lambda_2^2}\text{cot}(\sqrt{\lambda_1^2 + \lambda_2^2}b) \\\hline
          s(x,y,z) & =  \text{sinh}(\lambda_1x)\text{sinh}(\lambda_2y)\text{sin}(\sqrt{\lambda_1^2+\lambda_2^2}z)            &  \sigma&= \lambda_1 \text{coth} (\lambda_1a) = \lambda_2 \text{coth} (\lambda_2b) = \sqrt{\lambda_1^2 + \lambda_2^2}\text{cot}(\sqrt{\lambda_1^2 + \lambda_2^2}c) \\\hline
          s(x,y,z) & =  \text{sinh}(\sqrt{\lambda_1^2+\lambda_2^2}x)\text{sin}(\lambda_1y)\text{sin}(\lambda_2z)            &  \sigma&= \lambda_1 \text{cot} (\lambda_1b) = \lambda_2 \text{cot} (\lambda_2c) = \sqrt{\lambda_1^2 + \lambda_2^2}\text{coth}(\sqrt{\lambda_1^2 + \lambda_2^2}a) \\\hline
          s(x,y,z) & =  \text{sin}(\lambda_1x)\text{sinh}(\sqrt{\lambda_1^2+\lambda_2^2}y)\text{sin}(\lambda_2z)            &  \sigma&= \lambda_1 \text{cot} (\lambda_1 a) = \lambda_2 \text{cot} (\lambda_2c) = \sqrt{\lambda_1^2 + \lambda_2^2}\text{coth}(\sqrt{\lambda_1^2 + \lambda_2^2}b) \\\hline
          s(x,y,z) & =  \text{sin}(\lambda_1x)\text{sin}(\lambda_2y)\text{sinh}(\sqrt{\lambda_1^2+\lambda_2^2}z)            &  \sigma&= \lambda_1 \text{cot} (\lambda_1b) = \lambda_2 \text{cot} (\lambda_2a) = \sqrt{\lambda_1^2 + \lambda_2^2}\text{coth}(\sqrt{\lambda_1^2 + \lambda_2^2}c)
\\\hline
		  s(x,y,z) & =  x \text{sinh} (\lambda y) \text{sin} (\lambda z)            &  \sigma&= 1/a,  \text{tanh}(\lambda b) = \lambda a = \text{tan}(\lambda c) \\\hline
          s(x,y,z) & =  x \text{sin} (\lambda y) \text{sinh} (\lambda z)            &  \sigma&= 1/a,  \text{tan}(\lambda b) = \lambda a = \text{tanh}(\lambda c) \\\hline
          s(x,y,z) & =  y \text{sinh} (\lambda x) \text{sin} (\lambda z)            &  \sigma&= 1/b,  \text{tanh}(\lambda a) = \lambda b = \text{tan}(\lambda c) \\\hline
          s(x,y,z) & =  y \text{sin} (\lambda x) \text{sinh} (\lambda z)            &  \sigma&= 1/b,  \text{tan}(\lambda a) = \lambda b = \text{tanh}(\lambda c) \\\hline
          s(x,y,z) & =  z \text{sinh} (\lambda x) \text{sin} (\lambda y)            &  \sigma&= 1/c,  \text{tanh}(\lambda a) = \lambda c = \text{tan}(\lambda b) \\\hline
          s(x,y,z) & =  z \text{sin} (\lambda x) \text{sinh} (\lambda y)            &  \sigma&= 1/c,  \text{tan}(\lambda a) = \lambda c = \text{tanh}(\lambda b) \\\hline
\end{align*} 

\vspace{1mm}
\noindent We may plot an analogue of Figure \ref{fig:smallest} by considering the cuboid with $c=1$ and $a,b \leq 1$ without loss of generality.

\FloatBarrier
\begin{figure}[!htbp]
\centering
\includegraphics[width=1\textwidth]{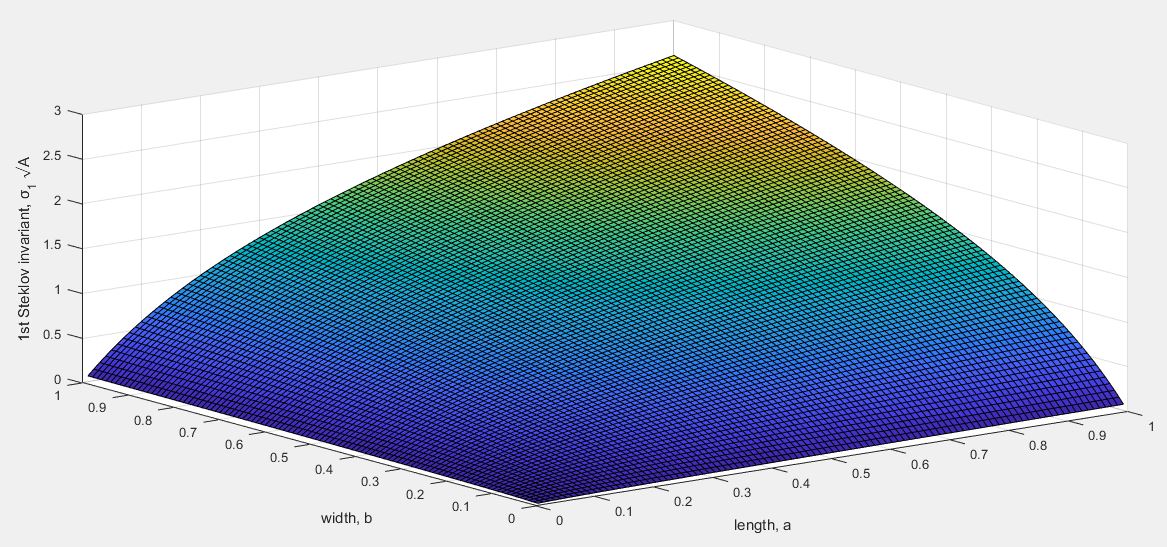}
\caption{\label{fig:smallestoncuboid}The 1st Steklov invariant for a cuboid on $[-a,a]\times[-b,b]\times[-1,1]$.}
\end{figure}
\FloatBarrier

\vspace{1mm}
\noindent Drawing similarities with the 2D scenario, it would seem likely that the following holds:
\begin{conjecture}
The surface in Figure \ref{fig:smallestoncuboid} is differentiable and tends to the origin (in the 1st octant).
\end{conjecture}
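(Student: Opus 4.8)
The plan is to treat the two half-side-lengths $a,b$ as the joint analogue of the single parameter $a$ of Section 1, and to run the argument of Claim \ref{diff} in two variables. By analogy with the Theorem of Section 1, I expect the first eigenvalue on the cuboid $[-a,a]\times[-b,b]\times[-1,1]$ (with $a,b\le 1$) to be supplied by the Class 001 eigenfunction that oscillates in the long direction $z$ and grows hyperbolically in $x$ and $y$, namely $s=\cosh(\lambda_1 x)\cosh(\lambda_2 y)\sin(\sqrt{\lambda_1^2+\lambda_2^2}\,z)$. Writing $\mu=\sqrt{\lambda_1^2+\lambda_2^2}$, its eigenvalue solves
$$\lambda_1\tanh(\lambda_1 a)=\lambda_2\tanh(\lambda_2 b)=\mu\cot\mu.$$
The first step, which I expect to be the genuine obstacle, is to prove that this eigenfunction really does realise $\sigma_1$ for every $(a,b)\in(0,1]^2$. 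This is the three-dimensional counterpart of Claim \ref{iffsquare} and the comparison preceding it, and is precisely the ``tedious'' real analysis flagged in the introduction: one must compare the lowest graph-intersection produced by this class against the lowest intersections of the other seven parity classes. The difficulty is that there are now two free frequencies rather than one; however, the monotonicity of $\nu\mapsto\nu\tanh(\nu t)$ and $\nu\mapsto\nu\coth(\nu t)$ from Lemma \ref{increasing}, together with the symmetry $a\leftrightarrow b$, should cut this down to a finite collection of comparisons of the same flavour as those carried out in Section 1. A subtle point here is that differentiability of the surface could fail along ridges where the minimising class changes, so one genuinely needs the minimiser to stay in a single smooth branch.

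Granting this reduction, differentiability follows from the implicit function theorem exactly as in Claim \ref{diff}. For fixed $(a,b)$ I would first secure existence and uniqueness of $(\lambda_1,\lambda_2)$: restricting to the range $\mu\in(0,\tfrac{\pi}{2})$ that keeps $\sigma=\mu\cot\mu$ positive, each equation $\lambda_i\tanh(\lambda_i\,\cdot)=\mu\cot\mu$ pins $\lambda_i$ down monotonically, and an intermediate-value argument (as in Claim \ref{diff}) locates a unique solution. I would then set $F=(F_1,F_2)$ with $F_1=\lambda_1\tanh(\lambda_1 a)-\mu\cot\mu$ and $F_2=\lambda_2\tanh(\lambda_2 b)-\mu\cot\mu$, and examine the Jacobian in $(\lambda_1,\lambda_2)$.

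The decisive computation is the nonvanishing of this Jacobian. Put $A=\partial_{\lambda_1}\!\big[\lambda_1\tanh(\lambda_1 a)\big]>0$ and $B=\partial_{\lambda_2}\!\big[\lambda_2\tanh(\lambda_2 b)\big]>0$ (positive by Lemma \ref{increasing}), and $c=-\tfrac{d}{d\mu}\big[\mu\cot\mu\big]=\mu\csc^2\mu-\cot\mu>0$ (positive since $\mu\cot\mu$ strictly decreases on $(0,\tfrac{\pi}{2})$); using $\partial_{\lambda_i}\mu=\lambda_i/\mu$, the Jacobian is
$$\begin{pmatrix} A+c\lambda_1/\mu & c\lambda_2/\mu \\ c\lambda_1/\mu & B+c\lambda_2/\mu \end{pmatrix},$$
whose determinant is $AB+Ac\,\lambda_2/\mu+Bc\,\lambda_1/\mu$, the two $c^2\lambda_1\lambda_2/\mu^2$ terms cancelling. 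This is strictly positive, so the Jacobian is invertible at every solution; the implicit function theorem then gives $(\lambda_1,\lambda_2)$, and hence $\sigma_1=\lambda_1\tanh(\lambda_1 a)$ together with its scale-invariant normalisation, as differentiable functions of $(a,b)$ on $(0,1)^2$.

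For the limiting behaviour I would argue that $\sigma_1\to 0$ as $(a,b)\to(0,0)$. Since $\mu$ is confined to the bounded interval $(0,\tfrac{\pi}{2})$, the frequencies $\lambda_1,\lambda_2$ stay bounded, so $\tanh(\lambda_i a)\sim\lambda_i a\to 0$ and therefore $\sigma_1=\lambda_1\tanh(\lambda_1 a)\to 0$ (forcing $\mu\uparrow\tfrac{\pi}{2}$, consistent with $\mu\cot\mu\to 0$); the normalising surface area $8(ab+a+b)$ also tends to $0$, so the surface approaches the origin in the first octant. As in the closing Claim of Section 1, the same conclusion, together with the expected monotonicity in each of $a$ and $b$, can be read off more softly from the Rayleigh quotient by testing against a trial function adapted to the degenerating cuboid.
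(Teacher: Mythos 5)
The paper offers no proof of this statement: it is explicitly left as a conjecture (``it would seem likely that the following holds''), with the supporting analysis deferred as the ``tedious'' real analysis mentioned in the abstract. So there is no proof to compare yours against; the only question is whether your sketch closes the gap, and it does not. The decisive missing step is the one you yourself flag: the claim that the class 001 eigenfunction $\cosh(\lambda_1 x)\cosh(\lambda_2 y)\sin(\sqrt{\lambda_1^2+\lambda_2^2}\,z)$ realises $\sigma_1$ for every $(a,b)\in(0,1]^2$. That claim is precisely the paper's other conjecture (Section 3), also unproven, and without it your implicit-function-theorem argument only establishes smoothness of one branch of candidate eigenvalues. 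Since the plotted surface is a pointwise minimum over many branches, differentiability can genuinely fail along loci where the minimising class switches (exactly as the 2D invariant would have a corner if IV(ii) did not win everywhere); asserting that the comparisons ``should'' reduce to finitely many cases of the Section 1 flavour is not a proof, and the 2D comparisons themselves lean on graphical and intermediate-value arguments that do not transfer mechanically to a two-frequency system.

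That said, the parts you do carry out are sound and are the correct 3D analogue of Claim \ref{diff}: the Jacobian of $(F_1,F_2)$ in $(\lambda_1,\lambda_2)$ is computed correctly, the cancellation of the $c^2\lambda_1\lambda_2/\mu^2$ terms leaving $AB+Ac\lambda_2/\mu+Bc\lambda_1/\mu>0$ is right (with $A,B>0$ from Lemma \ref{increasing} and $c>0$ because $\mu-\tfrac{1}{2}\sin 2\mu>0$), and the limiting argument --- $\mu<\tfrac{\pi}{2}$ bounds $\lambda_1,\lambda_2$, so $\lambda_1\tanh(\lambda_1 a)\to 0$ as $(a,b)\to(0,0)$, while the normalising surface area $8(ab+a+b)$ also vanishes --- correctly gives the ``tends to the origin'' half, conditional on the branch identification. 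You should also make explicit the existence and uniqueness of $(\lambda_1,\lambda_2)$ for each fixed $(a,b)$, which you only gesture at: for instance, parametrise by $\sigma$, solve each equation $\lambda_i\tanh(\lambda_i\,\cdot)=\sigma$ uniquely by the monotonicity of Lemma \ref{increasing}, and apply the intermediate value theorem to the remaining scalar equation in $\sigma$ on the range where $\mu\cot\mu$ is positive and decreasing.
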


\section{Analysis of the 1st Steklov eigenspace on the cuboid}
We may assume, without loss of generality, that $0 \leq a \leq b \leq c=1$.
Drawing similarities with the 2D scenario, it is likely that the following holds:

\begin{conjecture}
$\sigma_1$ is always due to the same one eigenfunction, namely cosh$(\lambda_1 x)$cosh$(\lambda_2 y)$sin$(\sqrt{\lambda_1^2+\lambda_2^2}z)$. Further, if $a<b<1$, then this is the unique eigenfunction which gives $\sigma_1$. Otherwise, there may be other eigenfunctions that give this eigenvalue and $\sigma_1$ becomes a multiple eigenvalue.
\end{conjecture}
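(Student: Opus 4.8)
The plan is to follow the strategy of Claims 1.3--1.6, but since in three dimensions each candidate eigenvalue is fixed by a \emph{coupled} system in the two parameters $\lambda_1,\lambda_2$, the direct graph-intersection argument must be replaced by a one-variable reformulation in the unknown $\sigma$ itself. Using the monotonicity of Lemma~\ref{increasing}, I would invert the determining relations: for a half-length $\ell$ let $\phi_\ell(\sigma)$ be the unique $\lambda>0$ with $\lambda\tanh(\lambda\ell)=\sigma$ (strictly increasing, $\phi_\ell(0^+)=0$); let $\chi_\ell(\sigma)$ solve $\lambda\coth(\lambda\ell)=\sigma$ (defined only for $\sigma>1/\ell$, and satisfying $\chi_\ell<\phi_\ell$); and let $\psi_\ell(\sigma)$ solve $\lambda\cot(\lambda\ell)=\sigma$ on the principal branch $\lambda\ell\in(0,\tfrac{\pi}{2})$ (strictly decreasing, defined for $\sigma\in(0,1/\ell)$). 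Each eigenfunction's candidate eigenvalue is then the smallest positive root of a scalar function $F(\sigma)$ obtained by substituting the two ``slaved'' parameters into the remaining (consistency) relation. For the conjectured minimiser $\cosh(\lambda_1 x)\cosh(\lambda_2 y)\sin(\sqrt{\lambda_1^2+\lambda_2^2}\,z)$ this is $F_\star(\sigma)=\mu(\sigma)\cot\!\big(\mu(\sigma)\,c\big)-\sigma$, where $\mu(\sigma)=\sqrt{\phi_a(\sigma)^2+\phi_b(\sigma)^2}$; one checks that $F_\star(0^+)=1/c>0$, that $F_\star$ is strictly decreasing, and that it changes sign before $\mu c$ reaches $\tfrac{\pi}{2}$, so that the candidate value $\sigma_\star$ exists, is unique, and satisfies $\sigma_\star<1/c=1$.

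The comparisons then all reduce to a single principle: if two such functions obey $F^{(1)}\le F^{(2)}$ pointwise and both are decreasing and positive at $0^+$, then the first root of $F^{(1)}$ is no larger than that of $F^{(2)}$. First I would show that an even hyperbolic factor beats an odd one: replacing a $\cosh$ by a $\sinh$ replaces $\phi_\ell$ by $\chi_\ell<\phi_\ell$, which lowers $\mu$ and hence (as $t\mapsto t\cot t$ is decreasing) raises the consistency term, so the odd variant has the larger root; indeed $\chi_\ell$ is undefined for $\sigma\le 1/\ell$, reflecting that every odd-hyperbolic eigenvalue is at least $1/\ell\ge 1$. Dually, an odd trig factor beats an even one, since a $\cos$ forces the combined frequency past $\tfrac{\pi}{2\ell}$ onto a branch where the consistency relation is only met at a larger $\sigma$. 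This reduces attention to functions that are $\cosh$ in every hyperbolic direction and $\sin$ in every trig direction. For the three placements of a single $\sin$ factor I would write the consistency term as $\tfrac1\ell\,g(\mu\ell)$ with $g(t)=t\cot t$; since a smaller half-length forces a larger slaved $\lambda$, at equal $\sigma$ one has $\mu_z\ge\mu_y\ge\mu_x$ while $c\ge b\ge a$, so both $\tfrac1\ell$ and the decreasing factor $g(\mu\ell)$ move the same way and the $\sin$-in-$z$ term is pointwise smallest. $F$-domination then yields $\sigma_\star\le$ the other two placements, with equality exactly when the relevant dimensions coincide.

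It remains to dispose of the one-hyperbolic-two-trig families and the special single-$\lambda$ eigenfunctions. The latter carry eigenvalue $1/a$, $1/b$ or $1/c$, all $\ge 1/c=1>\sigma_\star$, so they are never first (and $xyz$ exists only for $a=b=c$). The delicate step, which I expect to be the main obstacle, is the one-trig versus two-trig comparison: there the consistency function has a structurally different form --- a $\tanh$ of the combined frequency, now fed by the \emph{decreasing} inverses $\psi_\ell$ rather than the increasing $\phi_\ell$ --- so pointwise $F$-domination against $F_\star$ is not automatic, as the two consistency terms need not be comparable at every $\sigma$. I would handle it by isolating the best two-trig candidate (the $\sin$ factors on the two largest faces $b,c$, the combined frequency in the shortest direction $a$) and proving the one-sided estimate $\sigma_{\text{two-trig}}\ge\sigma_\star$, either by a two-sided bound on the competing consistency terms or by a direct lower bound forcing the two-trig root above $\sigma_\star$; the remaining placements are then worse still, since a $\sin$ on the shortest face inflates the frequency further.

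Finally, the uniqueness/multiplicity clause follows by tracking when the dominations above are strict. Each is strict unless a pair of dimensions is equal: when $a=b$ the $x\leftrightarrow y$ symmetry produces a second independent minimiser, and when $b=c=1$ the $\sin$ factor may be moved from $z$ to $y$ at the same $\sigma$. Hence $\sigma_1$ is simple precisely when $a<b<1$, and is a multiple eigenvalue otherwise, as claimed.
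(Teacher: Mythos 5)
First, be aware that the paper does not prove this statement at all: it is presented purely as a conjecture (``it is likely that the following holds''), supported only by analogy with the two-dimensional case, so there is no proof of record to compare yours against. That said, your $\sigma$-space reformulation is a genuinely promising framework and goes further than the paper does. Inverting the face conditions so that each candidate eigenvalue is the first root of a scalar function $F(\sigma)$ is sound; your observation that every eigenfunction containing a $\sinh$ factor or a linear factor has $\sigma \ge 1/\ell \ge 1/c = 1$, while the conjectured minimiser satisfies $\sigma_\star = \mu\cot(\mu c) < 1/c = 1$, eliminates the majority of the Section 2 tables in one stroke; and the placement comparison among the three $\cosh\cdot\cosh\cdot\sin$ candidates (via $\phi_a \ge \phi_b \ge \phi_c$ and the monotonicity of $t \mapsto t\cot t$) is correct.

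However, there is a genuine gap exactly where you flag one, and it is not minor. After your reductions the surviving competitor is the family $\cosh(\sqrt{\lambda_1^2+\lambda_2^2}\,x)\sin(\lambda_1 y)\sin(\lambda_2 z)$ and its relatives, whose consistency function $\mu(\sigma)\tanh(\mu(\sigma)a)-\sigma$ is built from the \emph{decreasing} inverses $\psi_b,\psi_c$ and is not pointwise comparable to $F_\star$ by any of the monotonicity devices used elsewhere in your argument; for the cube the two candidates are $0.8119\ldots$ and $0.5315\ldots$, so the desired inequality is true but not by a margin that a crude uniform bound in $(a,b)$ will obviously capture. This is precisely the three-dimensional analogue of the III(i)-versus-IV(ii) comparison of Claim \ref{iffsquare}, which is the heart of the two-dimensional argument and which even there the paper only settles by inspecting figures; writing that you would establish the one-sided estimate ``either by a two-sided bound or by a direct lower bound'' is a statement of intent, not a proof, so the central case of the conjecture remains open in your proposal. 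Separately, your closing sentence overclaims: when $a=b<c$ the determining equations force $\lambda_1=\lambda_2$, so the conjectured minimiser $\cosh(\lambda x)\cosh(\lambda y)\sin(\sqrt{2}\lambda z)$ is itself invariant under $x\leftrightarrow y$ and that symmetry produces no second eigenfunction; $\sigma_1$ should then remain simple, which is why the conjecture says only that there ``may'' be other eigenfunctions, whereas your ``simple precisely when $a<b<1$'' asserts multiplicity there.
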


\section{The square and the cube}
Now we will perform some explicit numerical calculations for the square and the cube.

\vspace{1mm}
\noindent It is important to note that for the square on $[-1,1]\times[-a,a]$ and the cube on $[-a,a]\times[-b,b]\times[-1,1]$, there are redundancies in the determining equations (sometimes across parity classes); however the eigenfunctions are still distinct, which gives rise to an eigenvalue being multiple. For instance, for the square, parity classes I(i) and I(ii) give the same eigenvalue, as do classes III(i) and IV(ii). The latter is an example of multiplicity across classes.

\vspace{1mm}
\noindent Let us collect, for the square, the eigenfunctions according to determining equation, along with their corresponding candidate for $\sigma_1$. We include the eigenfunction $s(x,y)=xy$.

\FloatBarrier
\begin{table}[!htbp]
\centering
\begin{tabular}{l|c|c|r}
Eigenfunctions & Determining Equation & Eigenvalue $\sigma$\\\hline\hline
cosh$(\nu x)$cos$(\nu y)$, & tan$(\nu)+$tanh$(\nu)=0$ & $\nu$tanh$(\nu)$ \\
cos$(\nu x)$cosh$(\nu y)$ & Candidate for $\nu_1 = 2.3650203...$ & Candidate for $\sigma_1 = 2.3236377...$ \\\hline
sinh$(\nu x)$sin$(\nu y)$, & tan$(\nu)=$tanh$(\nu)$ & $\nu$coth$(\nu)$ \\
sin$(\nu x)$sinh$(\nu y)$ & Candidate for $\nu_1 = 3.9266023...$ & Candidate for  $\sigma_1 = 3.9296545...$ \\\hline
cosh$(\nu x)$sin$(\nu y)$, & tan$(\nu)=$coth$(\nu)$ & $\nu$tanh$(\nu)$ \\
sin$(\nu x)$cosh$(\nu y)$ & Candidate for $\nu_1 = 0.9375520...$ & Candidate for  $\sigma_1 = 0.6882527...$ \\\hline
sinh$(\nu x)$cos$(\nu y)$, & tan$(\nu)+$coth$(\nu)=0$ & $\nu$coth$(\nu)$ \\
cos$(\nu x)$sinh$(\nu y)$ & Candidate for $\nu_1 = 2.3470455...$ & Candidate for  $\sigma_1 = 2.3903892...$ \\\hline
$xy$ & - & $1$ \\
&  & Candidate for  $\sigma_1 = 1$
\end{tabular}
\caption{\label{tablesquare}For the square, eigenfunctions collect into pairs.}
\end{table}
\FloatBarrier

\vspace{1mm}
\noindent Therefore, comparing all the candidates in Table \ref{tablesquare}, we see that the correct value for $\sigma_1$ is $0.6882527...$, whose two eigenfunctions are cosh$(\nu x)$sin$(\nu y)$ and sin$(\nu x)$cosh$(\nu y)$. The perimeter of our square is 8 units, so the first Steklov invariant for any square is $\sigma_1L=8\sigma_1=5.506...$, which, as one can double-check, is the maximum in Figure \ref{fig:smallest}.

\vspace{1mm}
\noindent (Of course, we already know from the previous section that class IV(ii) must give $\sigma_1$. However, since $a=1$, class III(i) also gives $\sigma_1$. See Claim \ref{iffsquare}.)

\vspace{1mm}
\noindent We perform a similar calculation for the cube, collecting the Steklov eigenfunctions according to the candidate for $\sigma_1$ which they produce. For brevity, we omit those eigenfunctions such as $s(x,y,z)=xyz$ which give $\sigma_1 = 1$; the following table lists only the cases with non-degenerate determining equations.

\FloatBarrier
\begin{table}[!htbp]
\centering
\begin{tabular}{l|c|c|r}
Eigenfunctions & Determining Equation & Eigenvalue $\sigma$\\\hline\hline
cosh$(\lambda_1 x)$cosh$(\lambda_2 y)$cos$(\sqrt{\lambda_1^2+\lambda_2^2}z)$ & $\lambda_1$tanh$\lambda_1$ = $\lambda_2$tanh$\lambda_2$ = $-\sqrt{\lambda_1^2+\lambda_2^2}$tan$(\sqrt{\lambda_1^2+\lambda_2^2})$ & $\lambda_1$tanh$\lambda_1$ \\

cosh$(\lambda_1 x)$cosh$(\lambda_2 z)$cos$(\sqrt{\lambda_1^2+\lambda_2^2}y)$ & Candidate for $((\lambda_1)_1,(\lambda_2)_1)$ = $(1.8041319,1.8041319)$ & Candidate for $\sigma_1$ = 1.7089319 \\

cosh$(\lambda_1 y)$cosh$(\lambda_2 z)$cos$(\sqrt{\lambda_1^2+\lambda_2^2}x)$ & & \\\hline
cos$(\lambda_1 x)$cos$(\lambda_2 y)$cosh$(\sqrt{\lambda_1^2+\lambda_2^2}z)$ & $-\lambda_1$tan$\lambda_1$ = $-\lambda_2$tan$\lambda_2$ = $\sqrt{\lambda_1^2+\lambda_2^2}$tanh$(\sqrt{\lambda_1^2+\lambda_2^2})$ & $\sqrt{\lambda_1^2+\lambda_2^2}$tanh$(\sqrt{\lambda_1^2+\lambda_2^2})$ \\

cos$(\lambda_1 x)$cos$(\lambda_2 z)$cosh$(\sqrt{\lambda_1^2+\lambda_2^2}y)$ & Candidate for $((\lambda_1)_1,(\lambda_2)_1)$ = $(2.1882115, 2.1882115)$ & Candidate for $\sigma_1$ = 3.0819274 \\

cos$(\lambda_1 y)$cos$(\lambda_2 z)$cosh$(\sqrt{\lambda_1^2+\lambda_2^2}x)$ & & \\\hline
cosh$(\lambda_1 x)$sinh$(\lambda_2 y)$cos$(\sqrt{\lambda_1^2+\lambda_2^2}z)$ & $\lambda_1$tanh$\lambda_1$ = $\lambda_2$coth$\lambda_2$ = $-\sqrt{\lambda_1^2+\lambda_2^2}$tan$(\sqrt{\lambda_1^2+\lambda_2^2})$ & $\lambda_1$tanh$\lambda_1$ \\

cosh$(\lambda_1 x)$sinh$(\lambda_2 z)$cos$(\sqrt{\lambda_1^2+\lambda_2^2}y)$ & Candidate for $((\lambda_1)_1,(\lambda_2)_1)$ = $(1.883677,1.677149)$ & Candidate for $\sigma_1$ = 1.7985693 \\

cosh$(\lambda_1 y)$sinh$(\lambda_2 z)$cos$(\sqrt{\lambda_1^2+\lambda_2^2}x)$ & & \\
sinh$(\lambda_1 x)$cosh$(\lambda_2 y)$cos$(\sqrt{\lambda_1^2+\lambda_2^2}z)$ & & \\
sinh$(\lambda_1 x)$cosh$(\lambda_2 z)$cos$(\sqrt{\lambda_1^2+\lambda_2^2}y)$ & & \\
sinh$(\lambda_1 y)$cosh$(\lambda_2 z)$cos$(\sqrt{\lambda_1^2+\lambda_2^2}x)$ & & \\\hline
cosh$(\lambda_1 x)$cosh$(\lambda_2 y)$sin$(\sqrt{\lambda_1^2+\lambda_2^2}z)$ & $\lambda_1$tanh$\lambda_1$ = $\lambda_2$tanh$\lambda_2$ = $\sqrt{\lambda_1^2+\lambda_2^2}$cot$(\sqrt{\lambda_1^2+\lambda_2^2})$ & $\lambda_1$tanh$\lambda_1$ \\

cosh$(\lambda_1 x)$cosh$(\lambda_2 z)$sin$(\sqrt{\lambda_1^2+\lambda_2^2}y)$ & Candidate for $((\lambda_1)_1,(\lambda_2)_1)$ = $(,)$ & Candidate for $\sigma_1$ = 0.5315091 \\

cosh$(\lambda_1 y)$cosh$(\lambda_2 z)$sin$(\sqrt{\lambda_1^2+\lambda_2^2}x)$ & & \\\hline
cos$(\lambda_1 x)$sin$(\lambda_2 y)$cosh$(\sqrt{\lambda_1^2+\lambda_2^2}z)$ & $-\lambda_1$tan$\lambda_1$ = $\lambda_2$cot$\lambda_2$ = $\sqrt{\lambda_1^2+\lambda_2^2}$tanh$(\sqrt{\lambda_1^2+\lambda_2^2})$ & $\sqrt{\lambda_1^2+\lambda_2^2}$tanh$(\sqrt{\lambda_1^2+\lambda_2^2})$ \\

cos$(\lambda_1 x)$sin$(\lambda_2 z)$cosh$(\sqrt{\lambda_1^2+\lambda_2^2}y)$ & Candidate for $((\lambda_1)_1,(\lambda_2)_1)$ = $(2.0017440,3.8679675)$ & Candidate for $\sigma_1$ = 4.3538085 \\

cos$(\lambda_1 y)$sin$(\lambda_2 z)$cosh$(\sqrt{\lambda_1^2+\lambda_2^2}x)$ & & \\
sin$(\lambda_1 x)$cos$(\lambda_2 y)$cosh$(\sqrt{\lambda_1^2+\lambda_2^2}z)$ & & \\
sin$(\lambda_1 x)$cos$(\lambda_2 z)$cosh$(\sqrt{\lambda_1^2+\lambda_2^2}y)$ & & \\
sin$(\lambda_1 y)$cos$(\lambda_2 z)$cosh$(\sqrt{\lambda_1^2+\lambda_2^2}x)$ & & \\\hline
cos$(\lambda_1 x)$cos$(\lambda_2 y)$sinh$(\sqrt{\lambda_1^2+\lambda_2^2}z)$ & $-\lambda_1$tan$\lambda_1$ = $-\lambda_2$tan$\lambda_2$ = $\sqrt{\lambda_1^2+\lambda_2^2}$coth$(\sqrt{\lambda_1^2+\lambda_2^2})$ & $\sqrt{\lambda_1^2+\lambda_2^2}$coth$(\sqrt{\lambda_1^2+\lambda_2^2})$ \\

cos$(\lambda_1 x)$cos$(\lambda_2 z)$sinh$(\sqrt{\lambda_1^2+\lambda_2^2}y)$ & Candidate for $((\lambda_1)_1,(\lambda_2)_1)$ = $(2.1843218,2.1843218)$ & Candidate for $\sigma_1$ = 3.1019388 \\

cos$(\lambda_1 y)$cos$(\lambda_2 z)$sinh$(\sqrt{\lambda_1^2+\lambda_2^2}x)$ & & \\\hline
sinh$(\lambda_1 x)$cosh$(\lambda_2 y)$sin$(\sqrt{\lambda_1^2+\lambda_2^2}z)$ & $\lambda_1$coth$\lambda_1$ = $\lambda_2$tanh$\lambda_2$ = $\sqrt{\lambda_1^2+\lambda_2^2}$cot$(\sqrt{\lambda_1^2+\lambda_2^2})$ & $\lambda_1$coth$\lambda_1$ \\

sinh$(\lambda_1 x)$cosh$(\lambda_2 z)$sin$(\sqrt{\lambda_1^2+\lambda_2^2}y)$ & Candidate for $((\lambda_1)_1,(\lambda_2)_1)$ = $(2.9145019,2.8791763)$ & Candidate for $\sigma_1$ = 2.8974090 \\

sinh$(\lambda_1 y)$cosh$(\lambda_2 z)$sin$(\sqrt{\lambda_1^2+\lambda_2^2}x)$ & & \\
cosh$(\lambda_1 x)$sinh$(\lambda_2 y)$sin$(\sqrt{\lambda_1^2+\lambda_2^2}z)$ & & \\
cosh$(\lambda_1 x)$sinh$(\lambda_2 z)$sin$(\sqrt{\lambda_1^2+\lambda_2^2}y)$ & & \\
cosh$(\lambda_1 y)$sinh$(\lambda_2 z)$sin$(\sqrt{\lambda_1^2+\lambda_2^2}x)$ & & \\\hline
sinh$(\lambda_1 x)$sinh$(\lambda_2 y)$cos$(\sqrt{\lambda_1^2+\lambda_2^2}z)$ & $\lambda_1$coth$\lambda_1$ = $\lambda_2$coth$\lambda_2$ = $-\sqrt{\lambda_1^2+\lambda_2^2}$tan$(\sqrt{\lambda_1^2+\lambda_2^2})$ & $\lambda_1$coth$\lambda_1$ \\

sinh$(\lambda_1 x)$sinh$(\lambda_2 z)$cos$(\sqrt{\lambda_1^2+\lambda_2^2}y)$ & Candidate for $((\lambda_1)_1,(\lambda_2)_1)$ = $(1.7665698,1.7665698)$ & Candidate for $\sigma_1$ = 1.8728895 \\

sinh$(\lambda_1 y)$sinh$(\lambda_2 z)$cos$(\sqrt{\lambda_1^2+\lambda_2^2}x)$ & & \\\hline
sin$(\lambda_1 x)$cos$(\lambda_2 y)$sinh$(\sqrt{\lambda_1^2+\lambda_2^2}z)$ & $\lambda_1$cot$\lambda_1$ = $-\lambda_2$tan$\lambda_2$ = $\sqrt{\lambda_1^2+\lambda_2^2}$coth$(\sqrt{\lambda_1^2+\lambda_2^2})$ & $\sqrt{\lambda_1^2+\lambda_2^2}$coth$(\sqrt{\lambda_1^2+\lambda_2^2})$ \\

sin$(\lambda_1 x)$cos$(\lambda_2 z)$sinh$(\sqrt{\lambda_1^2+\lambda_2^2}y)$ & Candidate for $((\lambda_1)_1,(\lambda_2)_1)$ = $(2.0014790,3.8676451)$ & Candidate for $\sigma_1$ = 4.3562732 \\

sin$(\lambda_1 y)$cos$(\lambda_2 z)$sinh$(\sqrt{\lambda_1^2+\lambda_2^2}x)$ & & \\
cos$(\lambda_1 x)$sin$(\lambda_2 y)$sinh$(\sqrt{\lambda_1^2+\lambda_2^2}z)$ & & \\
cos$(\lambda_1 x)$sin$(\lambda_2 z)$sinh$(\sqrt{\lambda_1^2+\lambda_2^2}y)$ & & \\
cos$(\lambda_1 y)$sin$(\lambda_2 z)$sinh$(\sqrt{\lambda_1^2+\lambda_2^2}x)$ & & \\\hline
sin$(\lambda_1 x)$sin$(\lambda_2 y)$cosh$(\sqrt{\lambda_1^2+\lambda_2^2}z)$ & $\lambda_1$cot$\lambda_1$ = $\lambda_2$cot$\lambda_2$ = $\sqrt{\lambda_1^2+\lambda_2^2}$tanh$(\sqrt{\lambda_1^2+\lambda_2^2})$ & $\sqrt{\lambda_1^2+\lambda_2^2}$tanh$(\sqrt{\lambda_1^2+\lambda_2^2})$ \\

sin$(\lambda_1 x)$sin$(\lambda_2 z)$cosh$(\sqrt{\lambda_1^2+\lambda_2^2}y)$ & Candidate for $((\lambda_1)_1,(\lambda_2)_1)$ = $(0.7371448,0.7371448)$ & Candidate for $\sigma_1$ = 0.8119520 \\

sin$(\lambda_1 y)$sin$(\lambda_2 z)$cosh$(\sqrt{\lambda_1^2+\lambda_2^2}x)$ & & \\\hline
sinh$(\lambda_1 x)$sinh$(\lambda_2 y)$sin$(\sqrt{\lambda_1^2+\lambda_2^2}z)$ & $\lambda_1$coth$\lambda_1$ = $\lambda_2$coth$\lambda_2$ = $\sqrt{\lambda_1^2+\lambda_2^2}$cot$(\sqrt{\lambda_1^2+\lambda_2^2})$ & $\lambda_1$coth$\lambda_1$ \\

sinh$(\lambda_1 x)$sinh$(\lambda_2 z)$sin$(\sqrt{\lambda_1^2+\lambda_2^2}y)$ & Candidate for $((\lambda_1)_1,(\lambda_2)_1)$ = $(2.8949112,2.8949112)$ & Candidate for $\sigma_1$ = 2.9126739 \\

sinh$(\lambda_1 y)$sinh$(\lambda_2 z)$sin$(\sqrt{\lambda_1^2+\lambda_2^2}x)$ & & \\\hline
sin$(\lambda_1 x)$sin$(\lambda_2 y)$sinh$(\sqrt{\lambda_1^2+\lambda_2^2}z)$ & $\lambda_1$cot$\lambda_1$ = $\lambda_2$cot$\lambda_2$ = $\sqrt{\lambda_1^2+\lambda_2^2}$coth$(\sqrt{\lambda_1^2+\lambda_2^2})$ & $\sqrt{\lambda_1^2+\lambda_2^2}$coth$(\sqrt{\lambda_1^2+\lambda_2^2})$ \\

sin$(\lambda_1 x)$sin$(\lambda_2 z)$sinh$(\sqrt{\lambda_1^2+\lambda_2^2}y)$ & Candidate for $((\lambda_1)_1,(\lambda_2)_1)$ = $(3.7570495,3.7570495)$ & Candidate for $\sigma_1$ = 5.3135282 \\

sin$(\lambda_1 y)$sin$(\lambda_2 z)$sinh$(\sqrt{\lambda_1^2+\lambda_2^2}x)$ & & \\\hline

\end{tabular}
\caption{\label{tablecube}For the cube, eigenfunctions collect to give twelve categories with non-degenerate determining equations. (The other cases produce candidates of $\sigma_1 = 1$). (Calculations are rounded up to 7 decimal places.)}
\end{table}
\FloatBarrier

\vspace{1mm}
\noindent Comparing all the candidates in Table \ref{tablecube}, we see that the correct value for $\sigma_1$ is $0.5315091$, whose three eigenfunctions are cosh$(\lambda_1 x)$cosh$(\lambda_2 y)$sin$(\sqrt{\lambda_1^2+\lambda_2^2}z)$, cosh$(\lambda_1 x)$cosh$(\lambda_2 z)$sin$(\sqrt{\lambda_1^2+\lambda_2^2}y)$ and cosh$(\lambda_1 y)$cosh$(\lambda_2 z)$sin$(\sqrt{\lambda_1^2+\lambda_2^2}x)$.
\\ The surface area of our cube is 24 square units, so the first Steklov invariant for any cube is $\sigma_1 \sqrt{24}=2.603...$, which, as one can double-check, is the maximum in Figure \ref{fig:smallestoncuboid}.


\begin{thebibliography}{100}
\raggedright

\bibitem{Auchmuty}
Auchmuty, G. \& Cho, M. (2014) Boundary Integrals and Approximations of Harmonic Functions. Arxiv. [Preprint] Available from: https://arxiv.org/abs/1411.1443v4. [Accessed: 26th August 2017].

\bibitem{Girouard}
Girouard, A. \$ Polterovich, I. (2014) Spectral Geometry of the Steklov problem. Arxiv. [Preprint] Available from: https://arxiv.org/abs/1411.6567. [Accessed: 26th August 2017].
\end{thebibliography}
\end{document}